\DeclareMathAlphabet{\mathbf}{OT1}{cmr}{bx}{it}
\newcommand{\vb}{{\mathbf B}}
\newcommand{\vbv}{{\mathbf b}}
\newcommand{\vc}{{\mathbf C}}
\newcommand{\vcv}{{\mathbf c}}
\newcommand{\vf}{{\mathbf f}}
\newcommand{\vq}{{\mathbf Q}}
\newcommand{\vu}{{\mathbf U}}
\newcommand{\vw}{{\mathbf W}}
\newcommand{\vx}{{\mathbf X}}
\newcommand{\vxv}{{\mathbf x}}
\newcommand{\vy}{{\mathbf Y}}
\newcommand{\spK}{{\cal K}}
\renewcommand{\d}{\,\mathrm{d}}
\newcommand{\C}{\mathbb{C}}
\newcommand{\Cn}{\mathbb{C}^n}
\newcommand{\Cnn}{\mathbb{C}^{n \times n}}
\newcommand{\Cnl}{\mathbb{C}^{n \times \ell}}
\newcommand{\dmu}{\d\mu(t)}
\newcommand{\tildeU}{{U}_m}
\newcommand{\tildeV}{{V}_m}
\newcommand{\tildeX}{{X}_m}
\newcommand{\tildeG}{{G}_m}
\newcommand{\tildeH}{{H}_m}
\newcommand{\tildeE}{{E}_m}
\newcommand{\tildeF}{{F}_m}
\newcommand{\calU}{\mathcal{U}}
\newcommand{\calV}{\mathcal{V}}
\newcommand{\calA}{\mathcal{A}}
\newcommand{\calB}{\mathcal{B}}
\newcommand{\calN}{\mathcal{N}}
\newcommand{\calD}{\mathcal{D}}
\newcommand{\bbE}{\mathbb{E}}
\newcommand{\bbD}{\mathbb{D}}
\newcommand{\barC}{\overline{\mathbb{C}}}
\numberwithin{theorem}{section}
\DeclareMathOperator{\colspan}{colspan}
\newcommand{\spec}{\Lambda}
\DeclareMathOperator{\sign}{sign}
\newtheorem{remark}[theorem]{Remark}
\let\oldremark\remark
\renewcommand{\remark}{\oldremark\normalfont}
\newcommand{\lmin}{\lambda_{\min}}
\newcommand{\lmax}{\lambda_{\max}}
\newtheorem{example}[theorem]{Example}
\let\oldexample\example
\renewcommand{\example}{\oldexample\normalfont}
\renewcommand{\Re}{\textnormal{Re}}
\title{Low-rank updates of matrix functions II: Rational Krylov methods}
\author{Bernhard Beckermann\thanks{Laboratoire Paul Painlev\'e UMR 8524, D\'epartement de Math\'ematiques, Universit\'e de Lille, F-59655 Villeneuve d’Ascq CEDEX, France. E-mail: Bernhard.Beckermann@univ-lille1.fr. The work of Bernhard Beckermann has been supported in part by the Labex CEMPI (ANR-11-LABX-0007-01).} \and Alice Cortinovis\thanks{Institute of Mathematics, EPF Lausanne, 1015 Lausanne, Switzerland. E-mail: alice.cortinovis@epfl.ch. The work of Alice Cortinovis has been supported by the SNSF research project \emph{Fast algorithms from low-rank updates}, grant number: 200020\_178806.} \and Daniel Kressner\thanks{Institute of Mathematics, EPF Lausanne, 1015 Lausanne, Switzerland. E-mail: daniel.kressner@epfl.ch} \and Marcel Schweitzer\thanks{Mathematisch-Naturwissenschaftliche Fakult\"at, Heinrich-Heine-Universit\"at D\"usseldorf, Universit\"atsstra\ss{}e 1, 40225 D\"usseldorf, Germany. E-mail: marcel.schweitzer@hhu.de. The work of Marcel Schweitzer was partly supported by the SNSF research project \emph{Low-rank updates of matrix functions and fast eigenvalue solvers}.}}
\date{\today}
\begin{document}
\renewcommand{\thefootnote}{\fnsymbol{footnote}}
\maketitle \pagestyle{myheadings} \thispagestyle{plain}
\markboth{B. BECKERMANN, A. CORTINOVIS, D. KRESSNER AND M. SCHWEITZER}{RATIONAL KRYLOV UPDATES OF MATRIX FUNCTIONS}

\begin{abstract}
This work develops novel rational Krylov methods for updating a large-scale matrix function $f(A)$ when $A$ is subject to low-rank modifications.
It extends our previous work in this context on polynomial Krylov methods, for which we present a simplified convergence analysis. For the rational case, our convergence analysis is based on an exactness result that is connected to work by Bernstein and Van Loan on rank-one updates of rational matrix functions. We demonstrate the usefulness of the derived error bounds for guiding the choice of poles in the rational Krylov method for the exponential function and Markov functions. Low-rank updates of the matrix sign function require additional attention; we develop and analyze a combination of our methods with a squaring trick for this purpose. A curious connection between such updates and existing rational Krylov subspace methods for Sylvester matrix equations is pointed out.
\end{abstract}

\begin{keywords}
matrix function, low-rank update, rational Krylov subspace, tensorized Krylov subspace, sign function
\end{keywords}

\begin{AMS}
15A16, 65D30, 65F30, 65F60
\end{AMS}

%\textcolor{red}{??? DK: Alice can you please the definitions (spectral norm, Frobenius norm) at the place when they are first used? -- done ???}

\section{Introduction}\label{sec:intro}

% \begin{itemize}
% \item Alice/Marcel, can one of you please go over the entire paper and check the norms? I think there are many cases, where the Frobenius norm is measured but it is written $\|\cdot\|$ instead of $\|\cdot\|_F$.
% \end{itemize}

The need for computing matrix functions or associated quantities arises in a variety of applications, including network analysis~\cite{Benzi2020,EstradaHigham2010}, signal processing~\cite{Shuman2013}, machine learning~\cite{Stoll2019}, and differential equations~\cite{HochbruckOstermann2010}. In many of these applications, slight changes of the problem setting, such as
% \cite{ArrigoBenzi2016,ArrigoBenzi2016b}
removing a vertex in a graph or changing a parameter in a differential equation, induce a low-rank change of the matrix. In this work, we discuss new methods for updating the matrix function under such changes. Specifically, assuming that a matrix function $f(A)$ has been computed and $A$ is modified by a low-rank matrix $D$, we aim at computing the update
\begin{equation}\label{eq:low_rank_update}
f(A+D)-f(A)
\end{equation}
in a way that is cheaper than computing $f(A+D)$ from scratch. Such an update is also useful when only some quantities associated with $f(A)$, such as the trace or the diagonal entries, are of interest.

% arises in several applications. The most prominent example for such a situation is the change of communicability measures in network analysis (which are often based on the matrix exponential) when edges are added to or removed from a graph~. Other application areas include ??? add applications ??? change of boundary conditions ???
% General tool going beyond the realm of matrix functions. We give an example, updating invariant subspaces (maybe: solving matrix equations) but there are potentially many other applications ???

In~\cite{BeckermannKressnerSchweitzer2017}, we have introduced and analyzed an algorithm for efficiently approximating~\eqref{eq:low_rank_update} by projection onto polynomial Krylov subspaces.
While this algorithm often shows satisfactory convergence, especially for entire functions like the matrix exponential and matrices with a ``favorable'' spectral distribution, convergence can also be very slow in other cases. In particular, this can happen when $A$ has eigenvalues close to a singularity of $f$. A typical example is the matrix square root $A^{1/2}$ for a symmetric positive definite matrix $A$ with eigenvalues close to zero. Rational Krylov spaces can lead to much faster convergence in such situations; at least this is indicated by existing work on approximating
the action of the matrix function on a vector, $f(A)\vbv$, and solving matrix equations; see~\cite{DruskinKnizhnerman1998,Guettel2013,GuettelKnizhnerman2013,KnizhnermanSimoncini2010,Simoncini2007}.

The main goal of this paper is thus to extend the techniques of~\cite{BeckermannKressnerSchweitzer2017} to incorporate \emph{rational Krylov subspaces} and to analyze the convergence of the resulting algorithms. At the same time, we will also show that the original convergence analysis in~\cite{BeckermannKressnerSchweitzer2017} can be significantly simplified by using recent results from~\cite{CrouzeixKressner2020}.

The work of Bernstein and Van Loan in~\cite{BernsteinVanLoan2000} extends the Sherman--Morrison formula~\cite{ShermanMorrison1950} for updating matrix inverses to rational matrix functions.
In particular, Theorem 3 in~\cite{BernsteinVanLoan2000} gives an analytic expression for the update~\eqref{eq:low_rank_update} and shows that it has rank at most $m$ if $f$ is a rational function of degree $m$ and $D$ has rank one. In principle, it would be possible to exploit the good approximation properties of rational functions in the context of~\eqref{eq:low_rank_update} by first replacing $f$ with a suitable low-degree rational approximation $r$ and then using the formula from~\cite[Theorem 3]{BernsteinVanLoan2000} to approximate the update
$$f(A+D) - f(A) \approx r(A+D) - r(A).$$
We will discuss the relation of this approach to our new method in Section~\ref{sec:sm}.

% ??? START TODO LATER ???
The remainder of this paper is organized as follows. We begin by brief\/ly describing a general  subspace projection approach for the computation of the update~\eqref{eq:low_rank_update} in Section~\ref{sec:polynomial}. The particular choice of rational Krylov subspaces in this approach is then discussed in Section~\ref{sec:rational}. In addition, we show that the proposed rational Krylov method is exact when approximating updates of certain rational functions and discuss the connection of our approach to the generalized Sherman--Morrison formula for rational functions from~\cite{BernsteinVanLoan2000}. In Section~\ref{sec:convergence}, we analyze the convergence of our methods for several important matrix functions. 
%A first series of numerical experiments illustrating the performance of the method, also in comparison to the polynomial Krylov approach from~\cite{BeckermannKressnerSchweitzer2017} are reported in section~\ref{sec:experiments}. 
Afterwards, in Section~\ref{sec:sign}, we specifically focus on the matrix sign function and its peculiarities in the context of approximating low-rank updates; we conclude by showing a connection to Sylvester equations.
%We then explain how our approach can be used to update eigenvectors of a large, sparse matrix that is subject to a low-rank perturbation in section~\ref{sec:spectral_projectors}, by exploiting the intimate relation between the matrix sign function and spectral projectors and report numerical experiments to illustrate the performance of this approach.
%Concluding remarks are given in~\ref{sec:conclusions}.
% ??? END TODO LATER ???

\section{A subspace projection approach for low-rank updates}\label{sec:polynomial}

In this section, we present a general subspace projection approach for approximating the update~\eqref{eq:low_rank_update}, which includes the algorithm from~\cite{BeckermannKressnerSchweitzer2017} as well as our newly proposed algorithm.

Let $A \in \C^{n\times n}$ and $D \in \C^{n\times n}$ be such that both $f(A)$ and $f(A+D)$ are well defined. 
In the following, we describe how an approximation to   $f(A+D)-f(A)$ is extracted from two subspaces $\calU_m, \calV_m \subseteq \Cn$ of (low) dimension $m_U$ and $m_V$, respectively.
Considering orthonormal bases  $U_m, V_m$ of $\calU_m, \calV_m$, we let $G_m := U_m^\ast A U_m$ and $H_m := V_m^\ast A^\ast V_m$ denote the compressions of $A$ and $A^\ast$, respectively. We then use an approximation of the form
%\begin{equation}\label{eq:polynomial_update}
\[
f(A+D)-f(A) \approx U_mX_m(f)V_m^\ast,
\]
%\end{equation}
where $X_m(f)$ is the (1,2)-block of the (small) matrix function
\begin{equation}\label{eq:blockmatrix_krylov}
f\left(\begin{bmatrix}G_m & U_m^\ast D V_m \\ 0 & H_m^\ast + V_m^\ast D V_m\end{bmatrix}\right).
\end{equation}
In~\cite{BeckermannKressnerSchweitzer2017}, this particular choice of $X_m(f)$ was motivated by a polynomial exactness property for polynomial Krylov subspaces. We will see below, in Theorem~\ref{the:rational_exactness}, that an analogous property holds for rational Krylov subspaces. A more intuitive explanation, not tied to specific subspaces, is 
the observation~\cite[Lemma 2.2]{BeckermannKressnerSchweitzer2017} that
\begin{equation}\label{eq:fcalA}
f\left(\begin{bmatrix}A & D \\ 0 & A+D \end{bmatrix}\right) = \begin{bmatrix} f(A) & f(A+D)-f(A) \\ 0 & f(A+D)\end{bmatrix}.
\end{equation}
Note that the compression onto $\calU_m \oplus \calV_m$ of the block matrix on the left-hand side of~\eqref{eq:fcalA} corresponds to the matrix used in~\eqref{eq:blockmatrix_krylov}.

The described subspace projection approach is summarized in Algorithm~\ref{alg:polynomial_krylov}, which encompasses Algorithm 2 from~\cite{BeckermannKressnerSchweitzer2017}.

\begin{algorithm}
\caption{Subspace projection approach for approximating $f(A+D)-f(A)$}
\label{alg:polynomial_krylov}
\begin{algorithmic}[1]
%\State Choose subspaces $\calU_m,\ \calV_m \in \Cn$ with $\dim \calU_m = \dim\calV_m = m$
\State Compute orthonormal bases $U_m \in \C^{n\times m_U}$, $V_m \in \C^{n \times m_V}$ of subspaces $\calU_m, \calV_m$.
\State Compute compressions $G_m = U_m^\ast A U_m$ and $H_m =  V_m^\ast A^\ast V_m$.
\State Compute matrix function $F_m = f\left(\begin{bmatrix}G_m & U_m^\ast D V_m \\ 0 & H_m^\ast + V_m^\ast D V_m \end{bmatrix}\right).$
\State Set $X_m(f) = F_m(1\!:\!m_U,m_U+1\!:\!m_U+m_V)$.\label{line:defXm}
\State Return $U_m X_m(f) V_m^\ast$.
\end{algorithmic}
\end{algorithm}

In the Hermitian case, $A = A^\ast$ and $D = D^\ast$, it is sensible to choose $\calU_m = \calV_m$, and thus $U_m = V_m$. In turn, $G_m = H_m^\ast$ and the computation of the update simplifies. Using the relation~\eqref{eq:fcalA}, one observes that 
\begin{equation}\label{eq:Xm_hermitian_difference}
 X_m(f) = f\big(U_m^\ast (A+D) U_m\big) -  f\big( U_m^\ast A U_m \big) = f\big(G_m + U_m^\ast D U_m \big) - f(G_m).
\end{equation}

%In~\cite{BeckermannKressnerSchweitzer2017}, for the rank-one case $D = \vb\vc^\ast$, Algorithm~\ref{alg:polynomial_krylov} was proposed with Krylov subspaces $\calU_m = \spK_m(A,\vb)$ and $\calV_m = \spK_m(A^\ast,\vc)$, using the Arnoldi method (or the Lanczos method in the Hermitian case) for computing $U_m, V_m$ as well as $G_m, H_m$.

The stopping criterion proposed in~\cite{BeckermannKressnerSchweitzer2017} uses the difference of two iterates as a simple error estimator, i.e.,
\begin{equation}\label{eq:error_estimate_difference}
\|f(A+D)-U_mX_m(f)V_m^\ast\| \approx \|U_{m+d}X_{m+d}(f)V_{m+d}^\ast-U_mX_m(f)V_m^\ast\|
\end{equation}
for some small integer $d\ge 1$, where $\| \cdot \|$ denotes the spectral norm of a matrix.
When the subspaces are nested and, in turn, the orthonormal bases can be chosen to be nested (as it is, e.g., the case for Krylov subspaces and the Arnoldi method), we have
\begin{equation*}
\|U_{m+d}X_{m+d}(f)V_{m+d}^\ast-U_mX_m(f)V_m^\ast\| = \left\|X_{m+d}(f) - \left[\begin{array}{cc} X_m(f) & 0 \\ 0 & 0\end{array}\right]\right\|.
\end{equation*}
Hence, there is no need to explicitly form $U_{m+d}X_{m+d}(f)V_{m+d}^\ast$ or $U_mX_m(f)V_m^\ast$. The heuristic~\eqref{eq:error_estimate_difference} is often observed to give  fairly accurate approximations to the exact error even for small values of $d$, say $d =1$ or $d = 2$. A notable exception is when Algorithm~\ref{alg:polynomial_krylov} (almost) stagnates as $m$ increases; in this case a small value of $d$ might lead to severe underestimates; see~\cite[Section 6.2]{BeckermannKressnerSchweitzer2017} for an example.

%In the next section, we discuss the use of rational Krylov subspaces as approximation spaces in the above approach, with focus on algorithmic details. A detailed convergence analysis of the resulting methods is presented in section~\ref{sec:convergence}.

% \input{conv_polynomial.tex}

\section{Block rational Krylov subspace projection}\label{sec:rational}

In this section, we combine Algorithm~\ref{alg:polynomial_krylov} with rational Krylov subspaces. We assume that $D$ is of rank $\ell$ and can thus be written as $D = \vb\vc^\ast$ for block vectors $\vb,\vc \in \C^{n \times \ell}$ of full rank.

While a polynomial Krylov subspace with respect to $A$ and $B = [\vbv_1,\dots.\vbv_\ell]$  takes the form
\[
 \spK_m(A,\vb) = \colspan\!\big\{ \vb, A\vb,\ldots, A^{m-1}\vb \big\}
 = \spK_m(A,\vbv_1) + \dots + \spK_m(A,\vbv_\ell),
\]
the rational Krylov subspaces considered in this work take the form
\begin{align}
q_{m}(A)^{-1}\spK_m(A,\vb) = \colspan\!\big\{ q_{m}(A)^{-1} \vb, q_{m}(A)^{-1} A\vb,\ldots, q_{m}(A)^{-1} A^{m-1}\vb \big\},\label{eq:rational_ks}
%\\
%&= q_{m}(A)^{-1}\spK_m(A,\vbv_1) + \dots + q_{m}(A)^{-1}\spK_m(A,\vbv_\ell)\nonumber
\end{align}
for a polynomial $q_{m}(z) = (z-\xi_1)(z-\xi_2)\cdots(z-\xi_{m})$ of degree $m$
and fixed poles $\xi_1, \ldots, \xi_m \in \C$. Choosing some of the poles to be infinite corresponds to reducing the degree of $q_{m}$.
% I checked works by Goeckler/Grimm, Simoncini; they all start with B.
\begin{remark} \em 
When choosing one pole to be infinite, our definition~\eqref{eq:rational_ks} coincides with the subspace $q_{m-1}(A)^{-1}\spK_m(A,\vb)$ that is more commonly found in the literature; see, e.g.,~\cite{Guettel2010}. Note that $\vb\in q_{m-1}(A)^{-1}\spK_m(A,\vb)$ while this property fails to hold in general for $q_{m}(A)^{-1}\spK_m(A,\vb)$.
%, unless one of the poles is infinite. 
%In turn, Algorithm~\ref{alg:rational_arnoldi} also slightly differs from the standard rational Arnoldi method, which uses $\vb/\|\vb\|$ as the first basis vector.
One of the motivations for our choice~\eqref{eq:rational_ks} is that it nicely connects to the (generalized) Sherman--Morrison formula; see Section~\ref{sec:sm} below.
\end{remark}

%\comment{I replaced ``span'' by ``colspan'' here, because technically, span would mean elements of the form $q_m(A)^{-1}\sum_k \alpha_k A^k \vb$, which is not what we want. As ``colspan'' might not be completely clear to everyone, I added a second representation.}
Adapting the usual \emph{rational Arnoldi method}~\cite{ElsworthGuettel2020} to~\eqref{eq:rational_ks}, Algorithm~\ref{alg:rational_arnoldi} is used to compute an orthonormal basis $U_m = \big[\vu_1,\ldots,\vu_m\big]$ of $q_{m}(A)^{-1}\spK_m(A,\vb)$. 
In the case of an infinite pole $\xi_j = \infty$, line~\ref{line:rat2} of Algorithm~\ref{alg:rational_arnoldi} is replaced by 
$\vw_j \leftarrow A \vu_{j-1}$ for $j > 1$ and line~\ref{line:rat1} is replaced by $\tilde \vb \leftarrow A^{-1} \vb$ for $j = 1$.

% \textcolor{red}{??? DK: I don't think that Algorithm~\ref{alg:rational_arnoldi} is correct, e.g., $\vb/\|\tilde \vb\|$. Alice, please check -- AC: now it should be OK, but it's a ``special case'' of Algorithm 2.1 in~\cite{ElsworthGuettel2020} ???}
\begin{algorithm}
 \caption{Block Rational Arnoldi method}
\label{alg:rational_arnoldi} 
\begin{algorithmic}[1]
\State $\tilde \vb \leftarrow (A-\xi_1 I)^{-1} \vb$ \label{line:rat1}
\State $\vu_1 \leftarrow$ orthonormal basis of $\tilde \vb$.
\For{$j=2,3,\dots,m$}
	\State $\vw_j \leftarrow (A-\xi_j I)^{-1}A\vu_{j-1}$. \label{line:rat2}
	\For{$k = 1,\dots,j-1$}
		\State ${\bm{\alpha}}_{k,j-1} \leftarrow  \vu_k^* \vw_j$.
		\State $\vw_j \leftarrow \vw_j - \vu_k \bm{\alpha}_{k,j-1}$
	\EndFor
	\State $\vu_{j} \leftarrow $ orthonormal basis of $ \vw_j$.
% 	\State $\alpha_{j+1,j} \leftarrow \|\vw_j\|_2$.
% 	\State $\vu_{j+1} \leftarrow 1/\alpha_{j+1,j} \vw_j$.
\EndFor
\end{algorithmic}
\end{algorithm}

The description of Algorithm~\ref{alg:rational_arnoldi} assumes $\dim \left(q_{m}(A)^{-1}\spK_m(A,\vb)\right) = m\ell$, that is,  all block vectors $\vw_j$ have full rank. We will make this assumption from here on when discussing algorithms. Deflation techniques for removing linearly dependent columns are discussed, e.g., in~\cite[Section 6]{ElsworthGuettel2020}. 

We conclude our discussion of rational Krylov subspaces with a variation of an existing exactness result for rational matrix functions~\cite[Lemma 4.6]{Guettel2010}.
\begin{lemma} \label{lemma:exactrational}
Let $\Pi_{m-1}/q_{m}$ denote the space of all rational functions with numerator degree at most $m-1$ and denominator $q_{m}(z)$.
 Let $U_m$ be an orthonormal basis of $q_{m}(A)^{-1}\spK_m(A,\vb)$. Then
 \[
  r(A)\vb = U_m r(U_m^* A U_m) U_m^* \vb,
 \]
 provided that $r(A)$ and $r(U_m^*  A U_m)$ are well-defined.
\end{lemma}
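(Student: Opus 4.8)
The plan is to exploit the identification
\[
\range(U_m) \;=\; q_m(A)^{-1}\spK_m(A,\vb) \;=\; \big\{\, r(A)\vb \;:\; r \in \Pi_{m-1}/q_m \,\big\},
\]
valid since $q_m(A)$ is invertible (this is implicit in the very definition of the subspace), where the right-hand side is read columnwise and is the rational counterpart of $\spK_m(A,\vb)=\{p(A)\vb:\deg p\le m-1\}$. The space $\Pi_{m-1}/q_m$ has dimension $m$, and by a partial-fraction decomposition it is spanned by the functions $(z-\xi)^{-l}$ with $\xi$ a pole of $q_m$ and $1\le l\le\mu(\xi)$, where $\mu(\xi)$ is the multiplicity of $\xi$ in $q_m$ (these are $\sum_\xi\mu(\xi)=m$ linearly independent elements of $\Pi_{m-1}/q_m$). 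Since both $r\mapsto r(A)\vb$ and $r\mapsto r(U_m^\ast A U_m)$ are linear, and the partial-fraction expansion of a given $r$ involves only poles of $r$ — which are poles of $q_m$ at which $r(A)$ and $r(U_m^\ast A U_m)$ are, by hypothesis, well-defined — it suffices to establish the identity for $r(z)=(z-\xi)^{-l}$, where $\xi$ is a pole of $q_m$ of multiplicity at least $l$ and is not an eigenvalue of $A$ or of $A_m:=U_m^\ast A U_m$. The block case $\vb\in\Cn{}^{\times\ell}$ needs no separate argument: everything is linear in $\vb$ and can be read off columnwise, using $\range(U_m)=\sum_{k=1}^\ell q_m(A)^{-1}\spK_m(A,\vbv_k)$.

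For such a fixed pole $\xi$, I would prove by induction on $l$ that $(A-\xi I)^{-l}\vb=U_m(A_m-\xi I)^{-l}U_m^\ast\vb$. The one recurring ingredient is the elementary identity that, for any matrix $W$ all of whose columns lie in $\range(U_m)$, one has $U_mU_m^\ast W=W$ and hence
\[
(A_m-\xi I)U_m^\ast W \;=\; U_m^\ast A U_m U_m^\ast W - \xi U_m^\ast W \;=\; U_m^\ast A W - \xi U_m^\ast W \;=\; U_m^\ast(A-\xi I)W.
\]
Setting $\vw_i:=(A-\xi I)^{-i}\vb$ for $i=0,1,\dots,l$, each of $\vw_1,\dots,\vw_l$ has its columns in $\range(U_m)$ because $(z-\xi)^{-i}\in\Pi_{m-1}/q_m$ for $1\le i\le l\le\mu(\xi)$. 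Applying the displayed identity with $W=\vw_{i+1}$ and using $(A-\xi I)\vw_{i+1}=\vw_i$ gives $(A_m-\xi I)U_m^\ast\vw_{i+1}=U_m^\ast\vw_i$, i.e.\ $U_m^\ast\vw_{i+1}=(A_m-\xi I)^{-1}U_m^\ast\vw_i$; iterating from $\vw_0=\vb$ yields $U_m^\ast\vw_l=(A_m-\xi I)^{-l}U_m^\ast\vb$, and left-multiplying by $U_m$ (legitimate since $\vw_l$ has columns in $\range(U_m)$) gives $\vw_l=U_m(A_m-\xi I)^{-l}U_m^\ast\vb$. Summing these identities against the partial-fraction coefficients of a general $r\in\Pi_{m-1}/q_m$, and using that the same decomposition writes $r(A_m)$ as the corresponding combination of the resolvents $(A_m-\xi I)^{-l}$, finishes the proof.

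I do not expect a genuine obstacle here — the argument parallels the proof of the cited result \cite[Lemma 4.6]{Guettel2010}. The only point that really uses the present set-up is the assertion that the intermediate vectors $(A-\xi I)^{-i}\vb$, $i\le\mu(\xi)$, lie in $\range(U_m)$: this is exactly where the (slightly non-standard) choice in \eqref{eq:rational_ks} of a denominator $q_m$ of full degree $m$ enters, replacing the fact $\vb\in\range(U_m)$ that one would use for the more common space $q_{m-1}(A)^{-1}\spK_m(A,\vb)$ discussed in the Remark above. The remaining care is purely bookkeeping about which poles of $q_m$ are permitted to be eigenvalues of $A$ or $A_m$, which is cleanly handled by reducing to the partial-fraction basis and invoking the well-definedness hypotheses one pole at a time.
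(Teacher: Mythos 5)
Your argument is correct, but it takes a genuinely different route from the paper. The paper's proof avoids partial fractions entirely: writing $r=p/q_m$ with $p\in\Pi_{m-1}$, it observes that $q_m(A)^{-1}\spK_m(A,\vb)=\spK_m(A,\vq)$ for the modified starting block $\vq=q_m(A)^{-1}\vb$, invokes the standard \emph{polynomial} exactness results for this Krylov space to get $p(A)\vq=U_m\,p(U_m^*AU_m)\,U_m^*\vq$ and $U_m^*q_m(A)\vq=q_m(U_m^*AU_m)\,U_m^*\vq$, and then eliminates $U_m^*\vq$ from the two identities. That handles all numerators in one stroke and delegates the work to cited lemmas, at the price of implicitly requiring $q_m(U_m^*AU_m)$ to be invertible (marginally stronger than well-definedness of $r(U_m^*AU_m)$ when $r$ has removable singularities). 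Your resolvent induction is more self-contained and elementary, uses only the hypotheses as literally stated (one actual pole of $r$ at a time), and in fact mirrors the partial-fraction strategy the paper itself uses later in the proof of Theorem~\ref{the:rational_exactness}; its mild cost is the extra bookkeeping about which intermediate blocks $(A-\xi I)^{-i}\vb$ lie in $\range(U_m)$, which you identify and justify correctly via $(z-\xi)^{-i}\in\Pi_{m-1}/q_m$ for $i\le\mu(\xi)$. Both proofs are valid; either could stand in the paper.
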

\begin{proof}
Consider $r = p / q_m$ for arbitrary $p \in \Pi_{m-1}$.
 We start by noting that $q_{m}(A)^{-1}\spK_m(A,\vb) = \spK_m(A,\vq)$ with $\vq = q_{m}(A)^{-1}\vb$. By existing results for (polynomial) Krylov subspaces, see~\cite[Lemma 3.1]{Saad1992} and~\cite[Lemma 3.9]{Guettel2010}, which can be applied completely analogously in the block Krylov setting, we obtain
 \begin{equation} \label{eq:polyp}
  p(A)\vq = U_m p(U_m^* A U_m) U_m^* \vq,
 \end{equation}
 as well as
 \[
  U_m^* \vb = U_m^* q_{m}(A) \vq = q_m(U_m^* A U_m) U_m^* \vq.
 \]
 The latter relation is equivalent to $U_m^* \vq = q_m(U_m^* A U_m)^{-1}  U_m^* \vb$ and gives, when inserted into~\eqref{eq:polyp}, the desired relation:
 \[
  r(A)\vb = p(A)\vq = U_m p(U_m^* A U_m) q_m(U_m^* A U_m)^{-1}  U_m^* \vb = U_m r(U_m^* A U_m) U_m^* \vb.
 \]
% \textcolor{red}{Note by DK: There is a subtle problem with zero-pole cancellation. One could run into a situation where $r(U_m^*  A U_m)$ is well-defined but $q_m(U_m^* A U_m)$ is \emph{not} invertible and hence the argument does not apply (but the result should still be true). I chose to ignore this subtlety; it is also ignored by Guettel and possibly other authors.}
\end{proof}
%\comment{Should we mention that in the block case, actually more general results are possible, involving rational functions where the coefficients of the polynomial part are $\ell \times \ell$ matrices? Our result corresponds to just choosing these coefficients as multiplies of $I_\ell$, and we do not really need anything else. It might still be worth mentioning.}
%\textcolor{red}{??? DK: If you have a useful application in mind, please go ahead and insert a remark. If not, let us not put it; it adds too much clutter. ???}

\subsection{Algorithm}

For computing an approximation of $f(A+\vb\vc^\ast) - f(A)$, we utilize Algorithm~\ref{alg:rational_arnoldi} to compute orthonormal bases $U_m$, $V_m$ of rational Krylov subspaces \[\calU_m = q_{m}(A)^{-1}\spK_m(A,\vb),\qquad \calV_m = \bar q_{m}(A^\ast)^{-1}\spK_m(A^\ast,\vc),\] where
$q_{m}(z) = (z-\xi_1)\cdots(z-\xi_{m})$ and $\bar q_{m}(z) = (z-\overline{\xi}_1)\cdots(z-\overline{\xi}_{m})$ are both determined by the same set of poles $\xi_1,\ldots,\xi_{m}$. Although it is in principle possible to choose a different set of poles for $\calV_m$, we are not aware of advantages of such a choice.
Once $U_m,V_m$ have been computed,
we apply the general subspace projection approach, Algorithm~\ref{alg:polynomial_krylov}, with these bases. For ease of reference, Algorithm~\ref{alg:rationalkrylovupdate} summarizes the resulting procedure.

\begin{algorithm}
\caption{Rational Krylov subspace approximation of $f(A+\vb\vc^\ast)-f(A)$}
\label{alg:rationalkrylovupdate}
\begin{algorithmic}[1]
\State Perform $m$ steps of Algorithm~\ref{alg:rational_arnoldi} to compute an orthonormal basis $\tildeU$ of $q_{m}(A)^{-1}\spK_m(A,\vb)$ and set $\tildeG = \tildeU^\ast A \tildeU$.
\State Perform $m$ steps of Algorithm~\ref{alg:rational_arnoldi} to compute an orthonormal basis $\tildeV$ of $\bar q_{m}(A^\ast)^{-1}\spK_m(A^\ast,\vc)$ and set $\tildeH = \tildeV^\ast A^\ast \tildeV$. \label{rationalstep2}
\State Compute matrix function $\tildeF = f\left( \begin{bmatrix}
\tildeG & (U_m^\ast\vb)(V_m^\ast\vc)^\ast \\ 0 & H_m^\ast + (V_m^\ast\vb)(V_m^\ast\vc)^\ast
\end{bmatrix}\right)$.
\State Set $\tildeX(f) = \tildeF(1:m,m+1:2m)$.
\State Return $\tildeU \tildeX(f) \tildeV^\ast$.
\end{algorithmic}
\end{algorithm}

%??? Which of these points are already in the toolbox. Highlight better that this is not our contribution. ???

Several remarks concerning the implementation of Algorithm~\ref{alg:rationalkrylovupdate} are in order:
\begin{enumerate}
 \item The efficient and stable implementation of rational Arnoldi methods requires some care, including the need for reorthogonalization; it is therefore advisable to build on available toolboxes, like, e.g., the \texttt{RKToolbox} by Berljafa, Elsworth and G\"uttel~\cite{BerljafaElsworthGuettel2014}. 
 \item In contrast to the (standard) Arnoldi method, the compressed matrices $\tildeG$ and $\tildeH$ do \emph{not} contain the orthogonalization coefficients from Algorithm~\ref{alg:rational_arnoldi} explicitly. There are procedures which, possibly under additional conditions on the poles, circumvent
	 the additional computation of the products $\tildeU^\ast A \tildeU$ and $\tildeV^\ast A^\ast \tildeV$ and compute $\tildeG, \tildeH$ from $m\ell \times m\ell$ matrices containing the orthogonalization coefficients and the poles; see, e.g.,~\cite{ElsworthGuettel2020,Guettel2010,Guettel2013} for details.
% if the last poles are chosen as $\xi_m = \infty$ and $\tilde \xi_m = \infty$, the explicit computation of the products $\tildeU^\ast A \tildeU$ and $\tildeV^\ast A^\ast \tildeV$ can still be circumvented and $\tildeG, \tildeH$ can instead be computed from $m \times m$ matrices containing the orthogonalization coefficients and the poles; see, e.g.,~\cite{Guettel2010,Guettel2013} for details.
 \item %Suppose that $A$ is Hermitian, $\vc = \pm \vb$,
%  \textcolor{red}{??? In the rank-one case, these are indeed the only two possibilities but for higher ranks this is not true. Shall we be a bit more general? In principle, one can split up the rhs but this may not be as stable as treating the full Hermitian case. There is possibly a similar discussion in the Kressner/Massei/Robol paper. ???}
%  and the poles are closed under complex conjugation, that is, $\xi$ is a pole if and only if $\bar \xi$ is a pole, and both poles $\xi,\bar\xi$ have the same multiplicity. In particular, this holds when all poles are real. Then one can choose $U_m = V_m$ and the corresponding remarks for Algorithm~\ref{alg:polynomial_krylov} apply. Specifically, this allows us to make use of the simplified expression
% %\begin{equation}\label{eq:Xm_hermitian_difference_rational}
% $\tildeX(f) = f\big(\tildeG \pm U_m^\ast \vb \vb^\ast U_m \big) - f( \tildeG )$.
% %\end{equation} 

Assume that $A$ is Hermitian and the rank-$\ell$ update can be written in the form $D = \vb J \vb^*$ for some $\vb \in \C^{n \times \ell}$ and $J\in\C^{\ell \times \ell}$, that is, the columns of $D$ and $D^*$ span the same subspace of $\C^n$. In particular, this is the case when $D$ is also Hermitian. Further, let us suppose that the poles are closed under complex conjugation, that is, $\xi$ is a pole if and only if $\bar \xi$ is a pole, and both poles $\xi,\bar\xi$ have the same multiplicity. In particular, this holds when all poles are real. Then $q_m(A)^{-1}\spK_m(A, \vb) = \bar{q}_m(A^*)^{-1} \spK_m(A^*, \vb J^*)$. In turn, one can choose $V_m = U_m $ and Step~\ref{rationalstep2} in Algorithm~\ref{alg:rationalkrylovupdate} can be skipped and the corresponding remarks for Algorithm~\ref{alg:polynomial_krylov} apply. Specifically, we have the simplified expression $\tildeX(f) = f\big(\tildeG + U_m^\ast \vb J \vb^* U_m \big) - f( \tildeG )$.

\item When $A$ is Hermitian, the (standard) block Arnoldi method reduces to the  block Lanczos method~\cite{GolubUnderwood1977}. %\textcolor{red}{??? We have to be a bit more careful here in the block case. Block Lanczos? Do we have a nice recursion? ???} 
%Such short recurrences for the basis vectors of a rational Krylov subspace are only known in the presence of very few different poles.
Similarly, there exist short-term recurrences for extended Krylov subspaces, which only use the poles $0$ and $\infty$ repeatedly, see, e.g.,~\cite{DruskinKnizhnerman1998,Simoncini2007,JagelsReichel2011}. 

\item Each iteration of Algorithm~\ref{alg:rational_arnoldi} with a finite pole requires the solution of a shifted block linear system.
The efficiency of Algorithm~\ref{alg:rationalkrylovupdate} largely depends on how efficiently these linear systems can be solved. When using a direct sparse factorization such as the sparse LU factorization, it is advantageous to use only a few different poles, allowing for the frequent reuse of factorizations when poles repeat.
% , and how often the poles vary (when using a direct solver, one Cholesky factorization needs to be computed per pole). In cases where $A$ is banded with rather small bandwidth, e.g., tri- or pentadiagonal, and only a few different poles are needed for obtaining a good rational approximation of $f$, rational Krylov methods are thus particularly attractive.
In the non-Hermitian case, \emph{two} shifted linear systems---one with $A$ and one with $A^\ast$---have to be solved at each iteration of the method. It is worth pointing out that it suffices to compute only \emph{one} factorization
$A-\xi_j I = LU,$
because this immediately gives the other factorization 
$A^\ast -\overline{\xi}_j I = U^\ast L^\ast.$
%so that only the forward-backward substitution needs to be performed twice. As oftentimes the computation of $LU$ factorizations constitutes the by far largest part of the computation time in rational Krylov methods, the cost of the Hermitian and non-Hermitian method can be expected to be not as different as in the polynomial case (where the cost of the non-Hermitian algorithm is roughly twice the cost of the Hermitian one).
\end{enumerate}

\subsection{Exactness properties}
In~\cite{BeckermannKressnerSchweitzer2017}, it was shown that the polynomial Krylov subspace approximation for the update
$f(A+\vb\vc^\ast)-f(A)$
is exact when $f$ is a polynomial of a certain degree.
The following theorem extends this result to rational Krylov subspaces.
%In this section, we prove an analogue of this result for the rational case, where it turns out that exactness holds for rational functions with denominator $q_{m-1}$ and numerator degree at most $m-1$. This result will be the basis of the convergence analysis to follow.

\begin{theorem}\label{the:rational_exactness}
Given $A\in \Cnn$, $\vb,\vc \in \Cnl$ and $q_m(z) = (z-\xi_1)\cdots(z-\xi_m)$, with $\xi_1,\ldots,\xi_m\in \C$, the approximation returned by Algorithm~\ref{alg:rationalkrylovupdate} is exact for every $r \in \Pi_{m}/q_{m}$, that is,
$$r(A+\vb\vc^\ast)-r(A) = \tildeU \tildeX(r) \tildeV^\ast,$$
provided that $r(A)$, $r(A+\vb\vc^\ast)$ as well as $r(G_m)$, $r(H_m^\ast + (V_m^\ast\vb)(V_m^\ast\vc)^\ast)$ are well defined.
\end{theorem}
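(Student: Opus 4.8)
The plan is to reduce the rational case to the already-established polynomial exactness from \cite{BeckermannKressnerSchweitzer2017} by the same change-of-variables trick that underlies Lemma~\ref{lemma:exactrational}. The key observation is the block identity \eqref{eq:fcalA}: the matrix in \eqref{eq:blockmatrix_krylov} used by Algorithm~\ref{alg:rationalkrylovupdate} is exactly the compression of the $2n\times 2n$ block-triangular matrix $\calA := \begin{bmatrix} A & \vb\vc^\ast \\ 0 & A+\vb\vc^\ast\end{bmatrix}$ onto $\calU_m \oplus \calV_m$, with orthonormal basis $W_m := \begin{bmatrix} U_m & 0 \\ 0 & V_m\end{bmatrix}$. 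Writing $r = p/q_m$ with $p \in \Pi_m$, applying $r$ to $\calA$ gives, by \eqref{eq:fcalA}, the block matrix whose $(1,2)$ entry is precisely $r(A+\vb\vc^\ast)-r(A)$; so it suffices to show
\[
W_m^\ast\, r(\calA)\, \text{(applied appropriately)} = r(W_m^\ast \calA W_m)
\]
in the relevant block, i.e. that the small block matrix function $\tildeF = r(W_m^\ast \calA W_m)$ reproduces the $(1,2)$-block of $r(\calA)$ after conjugation by $W_m$.

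First I would make precise the ``appropriate'' projection statement. Since $r(\calA)$ is not of the form $g(\calA)\vv$ for a single vector, Lemma~\ref{lemma:exactrational} does not apply verbatim; instead I would work from the observation that $q_m(A)^{-1}\spK_m(A,\vb) = \spK_m(A,\vq)$ with $\vq = q_m(A)^{-1}\vb$, and likewise $\bar q_m(A^\ast)^{-1}\spK_m(A^\ast,\vc) = \spK_m(A^\ast,\tilde\vq)$ with $\tilde\vq = \bar q_m(A^\ast)^{-1}\vc$. I would then check that the union subspace $\calU_m \oplus \calV_m$ is itself a rational Krylov subspace for $\calA$: specifically, with $\vv := \begin{bmatrix}\vb \\ \vc\end{bmatrix}$ (or a suitable pair of block starting vectors built from $\vb$ and $\vc$), one has $q_m(\calA)^{-1}\spK_m(\calA,\cdot) = \calU_m \oplus \calV_m$, using that $\calA$ is block triangular with diagonal blocks $A$ and $A+\vb\vc^\ast$, and that $\vb\vc^\ast$ has its column space already inside the Krylov data. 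Once this is set up, one can invoke the block version of the polynomial-exactness lemma (\cite[Lemma 3.1]{Saad1992}, \cite[Lemma 3.9]{Guettel2010}) applied to $p(\calA)\vq_{\calA}$ together with the $q_m$-correction from the proof of Lemma~\ref{lemma:exactrational}, to conclude $W_m^\ast r(\calA) W_m = r(W_m^\ast \calA W_m)$ when both sides make sense, hence in particular equality of the $(1,2)$-blocks.

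The alternative, perhaps cleaner, route — and the one I would actually carry out — avoids forcing $\calA$ into the single-vector framework and instead argues directly on $U_m$ and $V_m$. Decompose $r(\calA)$ using \eqref{eq:fcalA}: its $(1,2)$ block is $r(A+\vb\vc^\ast) - r(A)$. Now use the integral/partial-fraction structure of $r \in \Pi_m/q_m$: write $r(z) = \gamma + \sum_{j} \frac{\rho_j}{z-\xi_j}$ (with obvious modifications for repeated poles), so that $r(A+\vb\vc^\ast)-r(A) = \sum_j \rho_j\big[(A+\vb\vc^\ast-\xi_j I)^{-1} - (A-\xi_j I)^{-1}\big]$. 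Each resolvent difference is $-\rho_j (A+\vb\vc^\ast-\xi_j)^{-1}\vb\vc^\ast(A-\xi_j)^{-1}$, whose column space lies in $\spK_m(A+\vb\vc^\ast, \cdot)$-type spaces and, crucially, $(A-\xi_j)^{-1}$ applied to $\vb$ lands in $\calU_m$ while $(A^\ast-\bar\xi_j)^{-1}$ applied to $\vc$ lands in $\calV_m$. This shows $r(A+\vb\vc^\ast)-r(A) = U_m U_m^\ast \big(r(A+\vb\vc^\ast)-r(A)\big) V_m V_m^\ast$. It then remains to identify $U_m^\ast(r(A+\vb\vc^\ast)-r(A))V_m$ with $\tildeX(r)$, which follows by applying Lemma~\ref{lemma:exactrational} (in its block form) separately to the $A$-action on $\vb$ and the $A^\ast$-action on $\vc$, propagated through the block-triangular structure so that the small matrix function \eqref{eq:blockmatrix_krylov} reproduces the compressed resolvent differences term by term.

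The main obstacle I anticipate is the bookkeeping around the \emph{degree-$m$} numerator (as opposed to degree $m-1$ in Lemma~\ref{lemma:exactrational}): since $\vb$ need not lie in $q_m(A)^{-1}\spK_m(A,\vb)$, the constant term $\gamma$ and the ``top'' coefficient of $p$ require separate care, and one must verify that the compression still captures them — this is exactly where the resolvent-difference structure (rather than $r$ itself) saves the day, because the difference of two degree-$m$/$q_m$ rational functions evaluated at $A$ and $A+\vb\vc^\ast$ only ever involves \emph{products of resolvents} $(A-\xi_j)^{-1}\vb$ and $\vc^\ast(A-\xi_j)^{-1}$, never a bare application of $A^m$. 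A secondary technical point is justifying all manipulations under the stated well-definedness hypotheses on $r(A)$, $r(A+\vb\vc^\ast)$, $r(G_m)$ and $r(H_m^\ast + (V_m^\ast\vb)(V_m^\ast\vc)^\ast)$, i.e. that none of the $\xi_j$ is an eigenvalue of any of these four matrices, so that the partial-fraction/resolvent calculus is legitimate on both the large and the small scale; this is routine but should be stated explicitly.
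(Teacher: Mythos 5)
Your second route is essentially the paper's own proof: partial fractions reduce the claim to the terms $(z-\xi_s)^{-j}$, the second resolvent identity (telescoped via $M^j-N^j=\sum_k N^{j-1-k}(M-N)M^k$ to handle repeated poles) writes each difference as a sum of products $(A-\xi_s I)^{-(j-k)}\vb\,\vc^\ast(A+\vb\vc^\ast-\xi_s I)^{-(k+1)}$, and Lemma~\ref{lemma:exactrational} applied to each factor matches this with the explicitly computed $(1,2)$ block of the small matrix, so the plan would go through. One correction to how you set it up: in the product that matches $\tildeX(r_{\xi_s,j})$ the \emph{left} factors must be resolvents of $A$ and the \emph{right} factors resolvents of $A+\vb\vc^\ast$ (since the $(2,2)$ block of \eqref{eq:blockmatrix_krylov} is the compression of $A+\vb\vc^\ast$, not of $A$), so the exactness lemma on the right must be invoked for $\calV_m$ regarded as a rational Krylov space of $A^\ast+\vc\vb^\ast$ with starting block $\vc$ --- which coincides with the one for $A^\ast$ by Sherman--Morrison--Woodbury because the perturbation's range is spanned by $\vc$ --- whereas your phrase ``the $A^\ast$-action on $\vc$'', taken literally, matches the wrong compression, and your chosen ordering $(A+\vb\vc^\ast-\xi_j)^{-1}\vb\vc^\ast(A-\xi_j)^{-1}$ puts the perturbed resolvent on the wrong side. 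You were also right to abandon the first route: $\calU_m\oplus\calV_m$ is not a rational Krylov space of $\calA$ in the required two-sided sense, precisely because $\calV_m$ is built from $A^\ast$ while the lower-right block of $\calA$ is $A+\vb\vc^\ast$.
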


\begin{proof}
By the partial fraction expansion, a rational function $r\in \Pi_{m}/q_{m}$ can be decomposed as the sum of a constant and scalar multiples of terms of the form $(z-\xi_s)^{-j}$, $j \leq m_s$, where $m_s$ denotes the multiplicity of $\xi_s$. By linearity, it suffices to show exactness for each of the terms individually.  Exactness trivially holds for a constant function.

It remains to show exactness for $r_{\xi_s,j}(z) = (z-\xi_s)^{-j}$ for $j = 1,\dots,m_s$. The matrix $\tildeX(r_{\xi_s,j})$ entering the rational Krylov approximation 
%\begin{equation}\label{eq:rational_approximation_proof}
$\tildeU \tildeX(r_{\xi_s,j})\tildeV^\ast$
%\end{equation}
is given by the $(1,2)$ block of the matrix
\begin{equation}\label{eq:block_matrix_rational_proof}
\begin{bmatrix}
\tildeG-\xi_s I_m & U_m^* \vb \vc^* V_m \\
0 & \tildeH^\ast -\xi_s I_m + \tildeV^\ast\vb \vc^* V_m
\end{bmatrix}^{-j}.
\end{equation}
For $j = 1$, we directly obtain
\begin{equation} \label{eq:tildeE1}
 \tildeX(r_{\xi_s,1})= -(\tildeG-\xi_s I_m)^{-1}(U_m^* \vb \vc^* V_m)(\tildeH^\ast -\xi_s I_m + \tildeV^\ast\vb \vc^* V_m)^{-1}.
\end{equation}
For $j>1$,~\eqref{eq:block_matrix_rational_proof} yields the recursive relation
\[
\tildeX(r_{\xi_s,j}) = (\tildeG-\xi_s I_m)^{-(j-1)}\tildeX(r_{\xi_s,1}) + \tildeX(r_{\xi_s,j-1}) (\tildeH^\ast -\xi_s I_m + \tildeV^\ast\vb \vc^* V_m)^{-1}.
\]
Resolving this recursion and inserting~\eqref{eq:tildeE1} gives
\begin{eqnarray}
\tildeX(r_{\xi_s,j}) &=&\sum\limits_{k=0}^{j-1} (\tildeG-\xi_s I_m)^{-(j-1-k)}\tildeX(r_{\xi_s,1})(\tildeH^\ast -\xi_s I_m + \tildeV^\ast\vb \vc^* V_m)^{-k} \nonumber \\
&=&- \sum\limits_{k=0}^{j-1}  (\tildeG-\xi_s I_m)^{-(j-k)} (U_m^* \vb \vc^* V_m) (\tildeH^\ast -\xi_s I_m + \tildeV^\ast\vb \vc^* V_m)^{-(k+1)}. \label{eq:recursionresolved}
\end{eqnarray}
	Since $r_{\xi_s,s} \in \Pi_{m-1}/q_m$, we know from Lemma~\ref{lemma:exactrational}, that
\begin{equation*}\label{eq:rational_exactness_vector1}
\tildeU (\tildeG-\xi_s I_m)^{-d}\tildeU^* \vb = (A-\xi_s I)^{-d}\vb  \text{ for all } d = 1,\dots,m_s
\end{equation*}
and
\begin{equation*}\label{eq:rational_exactness_vector2}
\vc^\ast V_m (\tildeH^\ast-\xi_s I_m+\tildeV^\ast\vb \vc^* V_m)^{-d} \tildeV^\ast = \vc^\ast (A-\xi_s I+\vb\vc^\ast)^{-d} \text{ for all } d = 1,\dots,m_s.
\end{equation*}
Combined with~\eqref{eq:recursionresolved}, these relations yield
\begin{equation}
 \tildeU \tildeX(r_{\xi_s,j})\tildeV^\ast = - \sum\limits_{k=0}^{j-1} (A-\xi_s I)^{-(j-k)}\vb  \vc^\ast (A-\xi_s I+\vb\vc^\ast)^{-(k+1)}.\label{eq:resolvent_sum1}
\end{equation}

We now use the matrix identity
\[M^j - N^j = \sum_{k = 0}^{j-1} N^{j-1-k}(M-N)M^k,
\]
see~\cite[Proposition 3.1]{BeckermannKressnerSchweitzer2017}, with $M = (A-\xi_s I+\vb\vc^\ast)^{-1}$ and $N = (A-\xi_s I)^{-1}$. This yields
\begin{eqnarray}
& & (A-\xi_s I+\vb\vc^\ast)^{-j} - (A-\xi_s I)^{-j} \nonumber\\
&=& \sum_{k=0}^{j-1}(A-\xi_s I)^{-(j-1-k)}((A-\xi_s I+\vb\vc^\ast)^{-1}-(A-\xi_s I)^{-1})(A-\xi_s I + \vb\vc^\ast)^{-k} \nonumber  \\
&=& -\sum_{k=0}^{j-1}(A-\xi_s I)^{-(j-k)}\vb\vc^\ast(A-\xi_s I + \vb\vc^\ast)^{-(k+1)}, \label{eq:resolvent_sum2}
\end{eqnarray}
where the latter equality utilizes the second resolvent identity. Comparing~\eqref{eq:resolvent_sum2} with~\eqref{eq:resolvent_sum1}
%yields
%$$ \tildeU \tildeX(r_{\xi_i,j})\tildeV^\ast = (A-\xi_i I+\vb\vc^\ast)^{-j} - (A-\xi_i I)^{-j}$$
establishes the desired exactness property for $r_{\xi_s,j}$ for $j \leq m_s$. 
\end{proof}

\begin{remark} \label{remark:infinite} \em 
Although the statement of Theorem~\ref{the:rational_exactness} assumes the poles to be finite, the result also holds in the presence of infinite poles. To see this, let $\widetilde{m} \leq m$ be the multiplicity of $\infty$ as a pole of the rational Krylov subspace, that is, $\deg q_{m} = m-\widetilde{m}$.
We can then decompose a rational function $r\in \Pi_{m}/q_{m}$ as $r = p + \tilde r$ with $p \in \Pi_{\widetilde{m}}$ and $\tilde r \in \Pi_{m-\widetilde{m}-1}/q_{m}$.
By linearity, it suffices to show exactness for $p$ and $\tilde r$ individually. Because of $\spK_{\widetilde m}(A,\vb) \subset q_{m}(A)^{-1}\spK_m(A,\vb)$, exactness for $p$ can be shown along the lines of the proof of Theorem~3.2 in~\cite{BeckermannKressnerSchweitzer2017}. Exactness for $\tilde r$ follows directly from the proof of Theorem~\ref{the:rational_exactness}.
\end{remark}

\subsection{Connection to the Sherman--Morrison formula and its generalization to rational functions} \label{sec:sm}

It is instructive to rederive the Sherman--Morrison formula for rank-one updates from Algorithm~\ref{alg:rationalkrylovupdate}. Let $A$, $\vbv \not=0$, $\vcv\not=0$ be such that $A$ and $A+\vbv\vcv^*$ are invertible. By Theorem~\ref{the:rational_exactness}, one step of Algorithm~\ref{alg:rationalkrylovupdate} with pole $0$ should produce the exact update $(A+\vbv\vcv^*)^{-1}-A^{-1}$. In this situation, $U_1 = A^{-1} \vbv / \beta$, $V_1 = A^{-\ast} \vcv / \gamma$ with $\beta = \|A^{-1} \vbv\|$ and $\gamma = \|A^{-*}\vcv\|$. Therefore,
\[
\begin{bmatrix}
G_1 & (U_1^\ast\vbv)(V_1^\ast\vcv)^\ast \\ 0 & H_1^\ast + (V_1^\ast\vbv)(V_1^\ast\vcv)^\ast
\end{bmatrix} = \begin{bmatrix}
\frac{1}{\beta^2} \vbv^\ast A^{-\ast} \vbv &  \frac{1}{\beta\gamma} (\vbv^\ast A^{-\ast} \vbv) (\vcv^\ast A^{-\ast} \vcv) \\
0 & \frac{1}{\gamma^2} (\vcv^\ast A^{-\ast} \vcv) (1 + \vcv^* A^{-1} \vbv).
\end{bmatrix}
\]
Provided that $\vbv^\ast A^{-\ast} \vbv \not=0$, $\vcv^\ast A^{-\ast} \vcv\not=0$, and $1 + \vcv^* A^{-1} \vbv \not=0$, this matrix is invertible and the $(1,2)$ entry of its inverse is given by
$
 -\beta\gamma / (1 + \vcv^* A^{-1} \vbv).
$
Hence, Algorithm~\ref{alg:rationalkrylovupdate} returns the exact update
\[
 -\frac{\beta\gamma}{1 + \vcv^* A^{-1}  \vbv} U_1V_1^* = -\frac{A^{-1} \vbv \vcv^* A^{-1}}{1+\vcv^\ast A^{-1}\vbv}.
\]
Two observations can be made. On the one hand, the Sherman--Morrison formula is nicely reproduced by Algorithm~\ref{alg:rationalkrylovupdate}. On the other hand, two assumptions ($\vbv^\ast A^{-\ast} \vbv \not=0$, $\vcv^\ast A^{-\ast} \vcv\not=0$) need to be made that are not necessary, neither for the existence of  $(A+\vbv\vcv^*)^{-1}-A^{-1}$ nor for the validity of the Sherman--Morrison formula. Note that the violation of the conditions, $(\vbv^\ast A^{-\ast} \vbv)(\vcv^\ast A^{-\ast} \vcv)=0$, implies that the numerical range $W(A) := \{ \vxv^\ast\! A \vxv : \|\vxv\| = 1\}$ of $A$ contains $0$, a singularity of the matrix function. In general, it is not advisable to use Algorithm~\ref{alg:rationalkrylovupdate} in such situations and we will discuss in Section~\ref{sec:sign}, for a different scenario, how this can sometimes be circumvented.

In~\cite{BernsteinVanLoan2000}, Bernstein and Van Loan provide a generalization of the Sherman--Morrison formula for rational functions. The following theorem recalls their main result.

\begin{theorem}[Theorem 3 in~\cite{BernsteinVanLoan2000}] \label{thm:bernstein}
Let $r(z) = p(z)/q(z)$ with polynomials $p(z) = \sum_{i=0}^{m_p}\alpha_i z^i$ and $q(z) = \sum_{i=0}^{m_q}\beta_{i} z^i$ and set $m = \max\{m_p,m_q\}$. Let $H(\alpha)$ be the $m \times m$ Hankel matrix containing the coefficients $\alpha_i$, i.e.,
$$H(\alpha) = \begin{bmatrix}
\alpha_1 & \alpha_2 & \cdots & \alpha_{m_p} & 0& \cdots & 0 \\[-0.1cm]
\alpha_2 & & \reflectbox{$\ddots$} & \reflectbox{$\ddots$} & &   \reflectbox{$\ddots$}\\[-0.1cm]
\vdots & \reflectbox{$\ddots$} & \reflectbox{$\ddots$} & &  \reflectbox{$\ddots$}\\[-0.1cm]
\alpha_{m_p} & \reflectbox{$\ddots$} & &  \reflectbox{$\ddots$} \\[-0.1cm]
0& &  \reflectbox{$\ddots$} \\[-0.1cm]
\vdots  &  \reflectbox{$\ddots$} \\[-0.1cm]
0&&&&&& \text{\huge $0$} \\
\end{bmatrix} \in \C^{m \times m}$$
and define $H(\beta)\in \C^{m \times m}$ analogously. Suppose that $A \in \C^{n\times n}$, $\vbv \in \C^n$, $\vcv \in \C^n$ are such that 
$r(A)$ and $r(A+\vbv\vcv^\ast)$ are well defined. Set 
\begin{eqnarray*}
 K_m &=& [\vbv,A\vbv,\ldots,A^{m-1}\vbv],\\
 L_m &=& [\vcv, (A^\ast+\vcv\vbv^\ast)\vcv,\ldots,(A^\ast+\vcv\vbv^\ast)^{m-1}\vcv], \\
 Y_\alpha &=& L_mH(\alpha)^\ast,\ Y_\beta = L_mH(\beta)^\ast. 
\end{eqnarray*}
Then
\begin{equation}\label{eq:update_formula_vanloan}
r(A+\vbv\vcv^\ast)-r(A) = XY^\ast,
\end{equation}
where the $n\times m$ matrices $X,Y$ are defined by
$X = q(A)^{-1}K_m$ and $Y^\ast = Y_\alpha^\ast - M^{-1}Y_\beta^\ast(r(A)+XY_\alpha^\ast)$  with $M = I + Y_\beta^\ast X$.
\end{theorem}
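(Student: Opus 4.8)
The plan is to derive~\eqref{eq:update_formula_vanloan} from two classical tools: a polynomial update formula, applied separately to the numerator and to the denominator of $r$, and the Sherman--Morrison--Woodbury identity. Throughout I write $\tilde A := A + \vbv\vcv^\ast$, so that $\tilde A^\ast = A^\ast + \vcv\vbv^\ast$ and the matrix $L_m$ in the statement is exactly $[\vcv,\ \tilde A^\ast\vcv,\ \ldots,\ (\tilde A^\ast)^{m-1}\vcv]$.

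First I would prove a polynomial update lemma: for every polynomial $s(z)=\sum_{i=0}^m\sigma_i z^i$ of degree at most $m$,
\[
 s(\tilde A)-s(A) = K_m\, H(\sigma)\, L_m^\ast,
\]
where $H(\sigma)\in\C^{m\times m}$ is the Hankel matrix built from $\sigma_1,\sigma_2,\ldots$ in the same way as $H(\alpha)$. This rests on the telescoping identity $\tilde A^i-A^i=\sum_{k=0}^{i-1}A^k(\tilde A-A)\tilde A^{i-1-k}=\sum_{k=0}^{i-1}(A^k\vbv)(\vcv^\ast\tilde A^{i-1-k})$, in which $A^k\vbv$ is the $(k+1)$-st column of $K_m$ and $\vcv^\ast\tilde A^{i-1-k}$ is the conjugate transpose of the $(i-k)$-th column of $L_m$; this is precisely where the shift by $\vcv\vbv^\ast$ built into $L_m$ is needed. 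Summing over $i$ with weights $\sigma_i$ and collecting the scalars sandwiched between $K_m$ and $L_m^\ast$ into a matrix with $(a,b)$-entry $\sigma_{a+b-1}$ (and $0$ once $a+b-1$ exceeds the degree) identifies that matrix with $H(\sigma)$. Since $Y_\sigma:=L_m H(\sigma)^\ast$ satisfies $Y_\sigma^\ast=H(\sigma)L_m^\ast$, the lemma reads $s(\tilde A)-s(A)=K_m Y_\sigma^\ast$; taking $s=p$ and $s=q$ gives $p(\tilde A)=p(A)+K_mY_\alpha^\ast$ and $q(\tilde A)=q(A)+K_mY_\beta^\ast$.

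Next, because $r(A)$ and $r(\tilde A)$ are well defined, $q(A)$ and $q(\tilde A)$ are invertible, and $q(\tilde A)=q(A)\bigl(I+XY_\beta^\ast\bigr)$ with $X=q(A)^{-1}K_m$ shows that $I+XY_\beta^\ast$, and hence $M=I+Y_\beta^\ast X$, is invertible. The Sherman--Morrison--Woodbury formula then yields
\[
 q(\tilde A)^{-1}=q(A)^{-1}-XM^{-1}Y_\beta^\ast q(A)^{-1}.
\]
Finally I would substitute this, together with $p(\tilde A)=p(A)+K_mY_\alpha^\ast$, into $r(\tilde A)=q(\tilde A)^{-1}p(\tilde A)$, expand the product using $q(A)^{-1}p(A)=r(A)$ and $q(A)^{-1}K_m=X$, and factor $X$ to the left; a short computation produces $r(\tilde A)-r(A)=X\bigl(Y_\alpha^\ast-M^{-1}Y_\beta^\ast(r(A)+XY_\alpha^\ast)\bigr)=XY^\ast$, which is the claimed identity.

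The only genuinely delicate step is the polynomial lemma: one has to keep careful track of the indices, of which conjugate transpose lands where, and of the fact that the relevant coefficient matrix is the symmetric (not Hermitian) Hankel matrix $H(\sigma)$, and in particular to see why $L_m$ must be generated by $\tilde A^\ast$ rather than by $A^\ast$. Everything after that is a mechanical application of Sherman--Morrison--Woodbury, with the invertibility of $M$ falling out for free from the standing hypotheses.
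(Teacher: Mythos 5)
The paper does not prove this statement at all---it is imported verbatim as Theorem~3 of Bernstein and Van Loan \cite{BernsteinVanLoan2000}, so there is no in-paper proof to compare against. Your reconstruction is correct and is essentially the argument of the original reference: the telescoping identity $\tilde A^i - A^i = \sum_{k=0}^{i-1} A^k\vbv\vcv^\ast \tilde A^{i-1-k}$ correctly produces the polynomial update $s(\tilde A)-s(A)=K_m H(\sigma)L_m^\ast$ with the anti-diagonal indexing $a+b-1=i$ matching the stated Hankel matrix (and the constant term $\sigma_0$ harmlessly cancelling, which is why $H$ starts at $\sigma_1$); the factorization $q(\tilde A)=q(A)(I+XY_\beta^\ast)$ together with $\det(I+XY_\beta^\ast)=\det(I+Y_\beta^\ast X)$ gives invertibility of $M$; and the Sherman--Morrison--Woodbury expansion of $q(\tilde A)^{-1}p(\tilde A)$, using $q(A)^{-1}p(A)=r(A)$ and $q(A)^{-1}K_m=X$, yields exactly $X\bigl(Y_\alpha^\ast - M^{-1}Y_\beta^\ast(r(A)+XY_\alpha^\ast)\bigr)$. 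Your identification of why $L_m$ must be built from $\tilde A^\ast=A^\ast+\vcv\vbv^\ast$ rather than $A^\ast$ is precisely the delicate point, and you handle it correctly. The only caveat worth recording is that ``$r(A)$ and $r(\tilde A)$ well defined'' must be read as $q(A)$ and $q(\tilde A)$ nonsingular (which the formula $X=q(A)^{-1}K_m$ presupposes anyway); with that reading, the proof is complete.
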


Note that it is also stated in~\cite{BernsteinVanLoan2000} that the result of Theorem~\ref{thm:bernstein} can be extended to general rank-$\ell$ updates, but the technical details are omitted.

Consider a rational function $r$ of the form stated in Theorem~\ref{thm:bernstein} with $\beta_{m_q}\not=0$. Then Theorem~\ref{the:rational_exactness} and Remark~\ref{remark:infinite} state that Algorithm~\ref{alg:rationalkrylovupdate} is exact when choosing $m_q$ poles equal to the zeros of $q$ and, additionally, $\max\{m_p - m_q,0\}$ infinite poles. In turn, the low-rank updates produced by Algorithm~\ref{alg:rationalkrylovupdate} and Theorem~\ref{thm:bernstein} have the same rank and yield \emph{mathematically} the same result, up to normalization of the low-rank factors. Also, the cost of an algorithm based on Theorem~\ref{thm:bernstein} is comparable to the cost of Algorithm~\ref{alg:rationalkrylovupdate}. However, there are a number of important differences between these two approaches:

% Assuming that $r(A)$ is already known, the main computational work necessary for evaluating the update formula~\eqref{eq:update_formula_vanloan} for $m \ll n$ consists of $m-1$ matrix vector products with $A$ and $A^\ast+\vc\vb^\ast)$ each for forming the matrices $K_m$ and $L_m$, and $m_q$ linear system solves for computing $X = q(A)^{-1}K_m$. Thus, the computational cost of evaluating~\eqref{eq:update_formula_vanloan} is about the same as that of $m$ steps of a rational Krylov subspace method with $m_q$ finite poles. Thus, as already brief\/ly outlined in section~\ref{sec:intro}, one possible approach for approximating $f(A+\vb\vc^\ast)-f(A)$ would be to \emph{first} replace $f$ by an accurate enough rational approximation $r$ and then use~\eqref{eq:update_formula_vanloan} as an approximation for the desired update matrix.
% 
% Indeed, constructing a rational approximation $f\approx r \in \Pi_m/q_m$ for some $q_m \in \Pi_m$ and then using Algorithm~\ref{alg:rationalkrylovupdate} (with the roots of $q_m$ as poles) for approximating $r(A+\vb\vc^\ast$ would---in exact arithmetic---yield the same result as~\eqref{eq:update_formula_vanloan} due to the exactness property from Theorem~\ref{the:rational_exactness} with a comparable, but not lower, computational cost.
% 
% Thus, at first sight, it might seem as if our approach was no improvement over the known results from~\cite{BernsteinVanLoan2000}. This is, however, not true, as there are also several disadvantages to taking the route outlined above.
\begin{itemize}
\item Obviously, Algorithm~\ref{alg:rationalkrylovupdate} is more general as it applies to general functions while Theorem~\ref{thm:bernstein} is restricted to rational functions. As discussed in the introduction, Theorem~\ref{thm:bernstein} could still be used to address a general function $f$ by  constructing \emph{a priori} a rational approximation $r \approx f$. While Algorithm~\ref{alg:rationalkrylovupdate} also requires to choose the poles a priori, the numerator polynomial $p_m$ is determined automatically by the method. In turn, significantly less knowledge about the spectra of $A$ and $A+\vbv\vcv^*$ is needed in order to obtain effective approximations. Another advantage of Algorithm~\ref{alg:rationalkrylovupdate} is that it easily combines with existing adaptive pole selection strategies for rational Krylov methods~\cite{GuettelKnizhnerman2013}.

\item In contrast to Algorithm~\ref{alg:rationalkrylovupdate}, Theorem~\ref{thm:bernstein} makes explicit use of the \emph{non-orthogonal Krylov bases} $K_m,L_m$. These bases are prone to ill-conditioning as $m$ increases, see~\cite{Beckermann2000} for the case of a Hermitian matrix $A$ leading to numerical instabilities. Thus, when the degree of the rational function/approximation is rather high, we expect Algorithm~\ref{alg:rationalkrylovupdate} to be more accurate in the presence of round-off error.

\item Reiterating what we already observed for the classic Sherman--Morrison formula, Algorithm~\ref{alg:rationalkrylovupdate} requires two additional conditions not needed in Theorem~\ref{thm:bernstein}: $f(G_m)$, $f(H_m^\ast + (V_m^\ast\vbv)(V_m^\ast\vcv)^\ast)$ need to be well defined. Note, however, that these conditions are always met when the numerical ranges of $A$ and $A+\vbv\vcv^*$ do not contain a singularity of $f$.
%\item In the approach outlined above, the rational approximant $m$ needs to be fixed beforehand. In some situations it might not be clear a priori which degree $m$ is necessary to find a satisfactory approximation. In such a case, it is advantageous to use a rational Krylov method which increases the degree of the underlying rational approximant from one iteration to the next until the desired accuracy is reached.
%\item Related to the above issue, it might also sometimes be valuable to be able to use adaptive pole selection strategies, like, e.g., those of~\cite{GuettelKnizhnerman2013} in rational Krylov methods. This is also not possible when fixing $r_m$ a priori.
% I removed this because one could
\end{itemize}
 
Thus, we conclude that although our approach is related to the work in~\cite{BernsteinVanLoan2000}, it differs significantly in key aspects and it seems to be the preferred approach in many situations of practical interest.

%In the next section, we analyze the convergence of the presented rational Krylov methods. 
%One matrix function which we are particularly interested in from an application point of view is the matrix sign function. We therefore give a few details concerning this function at the typical behavior of (rational) Krylov methods for its approximation in the next section.

\section{Convergence analysis} \label{sec:convergence}

This section is concerned with the convergence analysis and its purpose is two-fold. We first show how the polynomial case can be treated in an elegant and, compared to our previous work~\cite{BeckermannKressnerSchweitzer2017}, much simpler fashion by using a result from~\cite{CrouzeixKressner2020}. Unfortunately, it is not clear how this technique extends to the rational case, which will therefore be treated separately in the second part.

In the following, we let
\begin{equation}\label{eq:error}
\tildeE(f) := f(A+\vb\vc^\ast) - f(A) - \tildeU\tildeX(f)\tildeV^\ast
\end{equation}
denote the error of the approximation returned by Algorithm~\ref{alg:polynomial_krylov}.

\subsection{Simpler convergence analysis for polynomial Krylov subspaces}\label{sec:conv_poly}

In this section, we consider the case in which $\calU_m$ and $\calV_m$ are block polynomial Krylov subspaces and we obtain a convergence result for Algorithm~\ref{alg:polynomial_krylov} based on polynomial approximation of the derivative of $f$; see Remark~\ref{rmk:compare} below for a comparison with the convergence analysis in~\cite{BeckermannKressnerSchweitzer2017}. 

%  \textcolor{red}{$\| A \|$ the spectral norm, and $\| A \|_F$ the Frobenius norm.} 
 The following lemma is key to our analysis; its proof uses a recent bound on the Fr\'echet derivative from~\cite{CrouzeixKressner2020}.  We recall that $W(A)$ denotes the numerical range of $A$.

\begin{lemma}\label{lemma:12block}
 Let $\mathcal{B} = \begin{bmatrix} B_{11} & B_{12} \\ 0 & B_{22} \end{bmatrix}$, let $\bbE$ be a compact convex set containing $W(B_{11})$ and $W(B_{22})$, and let $f$ be analytic in $\bbE$. Then 
 \begin{equation*}
  \| [f(\calB)]_{1,2} \|_F \le (1 + \sqrt{2})^2  \|f'\|_{\bbE} \|B_{12}\|_F,
 \end{equation*}
 where $[f(\calB)]_{1,2}$ denotes the $(1,2)$ block of $f(\calB)$ and $\| \cdot \|_{\bbE}$ denotes the supremum norm on $\bbE$.
\end{lemma}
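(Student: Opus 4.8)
The plan is to express the $(1,2)$-block of $f(\calB)$ as a Fréchet derivative of $f$ at $B_{11}$ (or equivalently a block off-diagonal of a larger matrix function) and then invoke the Crouzeix--Kressner bound on the Fréchet derivative. First I would recall the standard identity for block-triangular matrix functions: writing $\calB = \begin{bmatrix} B_{11} & B_{12} \\ 0 & B_{22}\end{bmatrix}$, the off-diagonal block satisfies
\[
[f(\calB)]_{1,2} = L_f(B_{11}, B_{22})[B_{12}],
\]
where $L_f(B_{11},B_{22})$ denotes the (generalized, rectangular) Fréchet-type operator $Z \mapsto \tfrac{1}{2\pi i}\int_\Gamma f(z)(zI - B_{11})^{-1} Z (zI-B_{22})^{-1}\d z$, with $\Gamma$ a contour enclosing $W(B_{11})\cup W(B_{22})$; this follows from the Cauchy integral representation of $f(\calB)$ applied blockwise, using $(z\calI - \calB)^{-1}$ whose $(1,2)$-block is $(zI-B_{11})^{-1}B_{12}(zI-B_{22})^{-1}$. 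When $B_{11} = B_{22}$ this is exactly the Fréchet derivative $Df(B_{11})[B_{12}]$; in general it is the divided-difference operator, but the Crouzeix--Kressner estimate is stated (or extends verbatim) for this rectangular two-argument version.

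Next I would quote the relevant result from~\cite{CrouzeixKressner2020}: for $f$ analytic on a compact convex set $\bbE \supseteq W(B_{11}) \cup W(B_{22})$, the operator norm of $L_f(B_{11},B_{22})$ with respect to the Frobenius norm is bounded by $(1+\sqrt2)^2 \|f'\|_{\bbE}$. The constant $(1+\sqrt2)^2$ is precisely the square of the Crouzeix--Palencia constant $1+\sqrt2$ for the numerical range, reflecting that the bound on a divided-difference / Fréchet operator picks up one factor of $(1+\sqrt2)$ from each of the two resolvents (one for $B_{11}$, one for $B_{22}$), together with the passage from $f$ to $f'$ via an integration-by-parts or mean-value argument in the contour integral. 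Applying this with $\bbE$ the given compact convex set and $Z = B_{12}$ yields directly
\[
\|[f(\calB)]_{1,2}\|_F = \|L_f(B_{11},B_{22})[B_{12}]\|_F \le (1+\sqrt2)^2 \|f'\|_{\bbE}\,\|B_{12}\|_F,
\]
which is the claim.

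The main obstacle I anticipate is making the first step fully rigorous and precise: namely, identifying exactly which quantity in~\cite{CrouzeixKressner2020} is being cited and checking that the stated constant $(1+\sqrt2)^2$ matches that reference in the two-argument (rectangular, $B_{11}\neq B_{22}$) setting rather than only the square case. If the cited result is stated only for the genuine Fréchet derivative $Df(B)[\cdot]$, one needs either a short remark that its proof goes through verbatim with two distinct (possibly non-square) arguments, since the proof only uses the field-of-values resolvent bound $\|(zI-B_{ii})^{-1}\|$-type estimates separately for each argument, or a reduction to the square case by embedding $B_{11}, B_{22}$ as diagonal blocks of a single matrix $B = \diag(B_{11},B_{22})$ and noting that $W(B) = \mathrm{conv}(W(B_{11})\cup W(B_{22})) \subseteq \bbE$, so that $[f(\calB)]_{1,2}$ appears as a sub-block of $Df(B)[\,\cdot\,]$ applied to the matrix with $B_{12}$ in the corresponding block and zeros elsewhere; the Frobenius norm is unchanged under this embedding, so the bound transfers. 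Everything else is a routine application of the quoted estimate.
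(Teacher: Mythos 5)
Your proposal is correct and follows essentially the same route as the paper: the paper identifies $[f(\calB)]_{1,2}$ with the $(1,2)$ block of the Fr\'echet derivative $L_f(\calD,\calN)$ at $\calD=\diag(B_{11},B_{22})$ applied to the nilpotent part $\calN$ (via Higham's exact first-order expansion for block-triangular arguments), and then invokes Corollary~5.1 of~\cite{CrouzeixKressner2020} together with $W(\calD)=\mathrm{conv}\bigl(W(B_{11})\cup W(B_{22})\bigr)\subseteq\bbE$. The embedding reduction you sketch at the end as a fallback is precisely the paper's argument, so the only cosmetic difference is that you derive the block identity from the Cauchy integral rather than citing it directly.
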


\begin{proof}
 For $n \times n$ matrices $A$ and $B$, let $L_f(A, B)$ denote the Fr\'echet derivative of $f$ at $A$ applied to the matrix $B$ and let $L_f(A, \cdot)$ denote the corresponding linear operator represented as an $n^2 \times n^2$ matrix. By~\cite[Theorem 4.12]{Higham2008}, 
 \begin{equation*}
  f(\calB) = f(\calD) + L_f(\calD, \calN), \text{ where } \calD := \begin{bmatrix} B_{11} & 0 \\ 0 & B_{22} \end{bmatrix} \text{ and } \calN := \begin{bmatrix} 0 & B_{12} \\ 0 & 0 \end{bmatrix}.
 \end{equation*}
Because $f(\calD)$ is block diagonal, we have that 
\begin{equation*}
 \| [ f(\calB) ]_{1,2} \|_F = \| L_f(\calD, \calN) \|_F \le \| L_f(\calD, \cdot) \| \cdot \|B_{12}\|_F.
\end{equation*}
Corollary 5.1 in~\cite{CrouzeixKressner2020} states that $\| L_f(\calD, \cdot) \| \le (1 + \sqrt{2})^2 \|f'\|_{W(\calD)}$, which concludes the proof because $W(\calD)$, as the convex hull of $W(B_{11})$ and $W(B_{22})$, is contained in $\bbE$.
\end{proof}

Lemma~\ref{lemma:12block} applied to the matrix $\begin{bmatrix} A & D \\ 0 & A+D \end{bmatrix}$ from~\eqref{eq:fcalA} gives the following result, which might be of independent interest.

\begin{corollary}\label{cor:difference}
 Let $A, D \in \Cnn$, let $\bbE$ be a compact convex set containing the union of $W(A)$ and $W(A + D)$, and let $f$ be analytic in $\bbE$. Then
 \begin{equation}\label{eq:difference}
  \| f(A+D) - f(A) \|_F \le (1 + \sqrt{2})^2 \| f' \|_{\bbE} \| D \|_F.
 \end{equation}
\end{corollary}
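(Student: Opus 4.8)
The plan is to apply Lemma~\ref{lemma:12block} directly to the block upper-triangular matrix
\[
\calB = \begin{bmatrix} A & D \\ 0 & A+D \end{bmatrix}
\]
appearing in the identity~\eqref{eq:fcalA}. With the identification $B_{11} = A$, $B_{12} = D$, and $B_{22} = A+D$, the hypotheses of Lemma~\ref{lemma:12block} are met: $\bbE$ is by assumption a compact convex set containing $W(A)$ and $W(A+D)$, and $f$ is analytic on $\bbE$. Hence the lemma yields
\[
\| [f(\calB)]_{1,2} \|_F \le (1 + \sqrt{2})^2 \|f'\|_{\bbE} \|D\|_F.
\]

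The remaining step is to identify the $(1,2)$ block of $f(\calB)$. This is exactly what identity~\eqref{eq:fcalA} provides: the $(1,2)$ block of $f(\calB)$ equals $f(A+D) - f(A)$. Substituting this into the bound above gives~\eqref{eq:difference} and completes the proof.

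There is essentially no obstacle here — the corollary is an immediate specialization of Lemma~\ref{lemma:12block} once one recognizes that~\eqref{eq:fcalA} exhibits $f(A+D)-f(A)$ precisely as the $(1,2)$ block of $f$ applied to this particular block-triangular matrix. One small point worth noting is that~\eqref{eq:fcalA} requires $f$ to be well defined at $A$, $A+D$, and at the block matrix $\calB$; analyticity of $f$ on $\bbE \supseteq W(A) \cup W(A+D) \supseteq W(\calB)$ (the last inclusion because $W(\calB)$ is contained in the convex hull of $W(A)$ and $W(A+D)$) guarantees all of these, so no extra assumptions are needed beyond those already stated.
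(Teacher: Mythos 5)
Your argument is exactly the paper's: Corollary~\ref{cor:difference} is obtained by applying Lemma~\ref{lemma:12block} to the block matrix $\calB = \left[\begin{smallmatrix} A & D \\ 0 & A+D \end{smallmatrix}\right]$ and reading off the $(1,2)$ block via~\eqref{eq:fcalA}, which is all the paper does as well. One correction to your closing parenthetical, though: the inclusion $W(\calB) \subseteq \mathrm{conv}\bigl(W(A) \cup W(A+D)\bigr)$ is \emph{false} in general --- the numerical range of a block upper-triangular matrix is not controlled by its diagonal blocks alone, and indeed the fact that $W(\calB)$ can be critically larger than this convex hull is precisely the point of Remark~\ref{rmk:compare} and the motivation for Lemma~\ref{lemma:12block}. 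Fortunately the claim is not needed: the well-definedness of $f(\calB)$ in~\eqref{eq:fcalA} requires only that $f$ be analytic in a neighborhood of the \emph{spectrum} of $\calB$, which equals $\Lambda(A) \cup \Lambda(A+D) \subseteq W(A) \cup W(A+D) \subseteq \bbE$, so your proof stands once that justification is replaced.
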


% \begin{proof}
%  By~\eqref{eq:fcalA}, $f(A+D) - f(A) = [f(\calA)]_{1,2}$ for $\calA = \begin{bmatrix} A & D \\ 0 & A+D \end{bmatrix}$. The conclusion therefore follows directly from Lemma~\ref{lemma:12block}.
% \end{proof}

When $A$ and $D$ are Hermitian, it is well known that the inequality~\eqref{eq:difference} holds without the constant $(1 + \sqrt{2})^2$; see, e.g.,~\cite[Proposition 3.1.5]{Skripka2019}. For general diagonalizable matrices $A$ and $A+D$, Corollary 2.4 in \cite{Gil2010} states that 
\begin{equation*}
\|f(A+D) - f(A)\|_F \le \kappa_A\kappa_{A+D}  \max |f'| \cdot \|D \|_F,
\end{equation*}
where $\kappa_A$, $\kappa_{A+D}$ are the condition numbers of the eigenvector matrices of $A$ and $A+D$, respectively. The maximum of $|f^\prime|$ is taken over the convex hull of the spectra of $A+D$ and $A$. Corollary~\ref{cor:difference} instead holds for any matrix and does not feature the potentially large constant $\kappa_A\kappa_{A+D}$, at the cost of bounding $f'$ on a larger domain $\bbE$. 

We are now prepared to state a convergence result for Algorithm~\ref{alg:polynomial_krylov} when using block polynomial Krylov subspaces.

\begin{theorem}\label{thm:poly}
 Let $A\in \Cnn$ and let $f$ be analytic in a compact convex set $\bbE$ containing $W(A)$ and $W(A + \vb \vc^*)$. Let $U_m$, $V_m$ be orthonormal bases of $\calU_m = \spK_m(A, \vb)$,  $\calV_m = \spK_m(A^*, \vc)$. Then the error of Algorithm~\ref{alg:polynomial_krylov} 
 satisfies
 \begin{equation*}
  \| E_m(f) \|_F \le 2(1 + \sqrt{2})^2 \|\vb \vc^*\|_F \inf_{p \in \Pi_{m-1}} \| f'- p\|_{\bbE} .
 \end{equation*}
\end{theorem}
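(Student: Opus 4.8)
The plan is to reduce to the exactness result of Theorem~\ref{the:rational_exactness}'s polynomial analogue together with the perturbation bound of Corollary~\ref{cor:difference}, applied twice. The key observation is that $E_m$ is unchanged if we replace $f$ by $f-p$ for any $p\in\Pi_{m-1}$: indeed, by the polynomial exactness property established in~\cite[Theorem 3.2]{BeckermannKressnerSchweitzer2017} (the polynomial Krylov version of our Theorem~\ref{the:rational_exactness}), $E_m(p)=0$ for every polynomial $p$ of degree at most $m-1$, and $E_m$ is linear in $f$ by construction. Hence $E_m(f)=E_m(f-p)$, and it suffices to bound $\|E_m(f-p)\|_F$ and then take the infimum over $p$.

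First, I would write $E_m(f-p) = \big[(f-p)(A+\vb\vc^\ast)-(f-p)(A)\big] - U_mX_m(f-p)V_m^\ast$ and bound the two pieces separately by the triangle inequality. For the first bracket, Corollary~\ref{cor:difference} applied with $g=f-p$ (which is analytic on $\bbE$ since $p$ is a polynomial) and $D=\vb\vc^\ast$ gives
\[
\|(f-p)(A+\vb\vc^\ast)-(f-p)(A)\|_F \le (1+\sqrt2)^2\,\|(f-p)'\|_{\bbE}\,\|\vb\vc^\ast\|_F = (1+\sqrt2)^2\,\|f'-p'\|_{\bbE}\,\|\vb\vc^\ast\|_F.
\]
For the second piece, $U_mX_m(f-p)V_m^\ast$ is, by construction in Algorithm~\ref{alg:polynomial_krylov}, built from the $(1,2)$-block of $(f-p)$ evaluated at the block-triangular compression $\mathcal{B}=\begin{bmatrix}G_m & U_m^\ast\vb\vc^\ast V_m\\ 0 & H_m^\ast+V_m^\ast\vb\vc^\ast V_m\end{bmatrix}$. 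Since $U_m,V_m$ have orthonormal columns, $\|U_mX_m(f-p)V_m^\ast\|_F=\|X_m(f-p)\|_F=\|[(f-p)(\mathcal{B})]_{1,2}\|_F$. I would then invoke Lemma~\ref{lemma:12block}: the numerical ranges $W(G_m)$ and $W(H_m^\ast+V_m^\ast\vb\vc^\ast V_m)$ are contained in $\bbE$ — the first because $W(G_m)=W(U_m^\ast AU_m)\subseteq W(A)\subseteq\bbE$, and the second because $H_m^\ast+V_m^\ast\vb\vc^\ast V_m = V_m^\ast(A+\vb\vc^\ast)V_m$, whose numerical range lies in $W(A+\vb\vc^\ast)\subseteq\bbE$ — so Lemma~\ref{lemma:12block} yields
\[
\|[(f-p)(\mathcal{B})]_{1,2}\|_F \le (1+\sqrt2)^2\,\|f'-p'\|_{\bbE}\,\|U_m^\ast\vb\vc^\ast V_m\|_F \le (1+\sqrt2)^2\,\|f'-p'\|_{\bbE}\,\|\vb\vc^\ast\|_F,
\]
using $\|U_m^\ast\vb\vc^\ast V_m\|_F\le\|\vb\vc^\ast\|_F$. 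Adding the two bounds gives $\|E_m(f-p)\|_F\le 2(1+\sqrt2)^2\|\vb\vc^\ast\|_F\|f'-p'\|_{\bbE}$, and since $p'$ ranges over all of $\Pi_{m-2}\subseteq$ ... — actually, more carefully, as $p$ ranges over $\Pi_{m-1}$, $p'$ ranges over $\Pi_{m-2}$, which would give an $m-2$ infimum; to get $\Pi_{m-1}$ in the statement one should instead directly parametrize: given any $q\in\Pi_{m-1}$, choose $p$ to be an antiderivative of $q$ (which still lies in $\Pi_m$, not $\Pi_{m-1}$). This is the one subtlety to get right — see below.

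The main obstacle is precisely this indexing point: the exactness property $E_m(p)=0$ holds for $p\in\Pi_{m-1}$, but the bound naturally produces $\inf_{p\in\Pi_{m-1}}\|f'-p'\|$, and $\{p':p\in\Pi_{m-1}\}=\Pi_{m-2}$, not $\Pi_{m-1}$. I would resolve this by checking the precise degree count in~\cite{BeckermannKressnerSchweitzer2017}: the polynomial Krylov approximation is in fact exact for $p\in\Pi_m$ (the block matrix~\eqref{eq:fcalA} compressed to $\calU_m\oplus\calV_m$ reproduces degree-$m$ polynomials acting on $\vb$ because $\calU_m=\spK_m(A,\vb)$ already contains $A^{m-1}\vb$ and the relevant powers appear through the block structure), so that $E_m(f)=E_m(f-p)$ for all $p\in\Pi_m$; then $\{p':p\in\Pi_m\}=\Pi_{m-1}$ and the stated infimum over $\Pi_{m-1}$ is exactly what comes out. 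Everything else is a routine combination of Corollary~\ref{cor:difference}, Lemma~\ref{lemma:12block}, unitary invariance of the Frobenius norm, and the submultiplicativity estimate $\|U_m^\ast\vb\vc^\ast V_m\|_F\le\|\vb\vc^\ast\|_F$.
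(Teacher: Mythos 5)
Your proposal is correct and follows essentially the same route as the paper's proof: reduce to $E_m(f)=E_m(f-q)$ via the polynomial exactness property, split by the triangle inequality, and apply Corollary~\ref{cor:difference} and Lemma~\ref{lemma:12block} to the two pieces. The degree subtlety you flagged is resolved exactly as in the paper, where exactness is invoked for $q\in\Pi_m$ so that $q'$ ranges over all of $\Pi_{m-1}$.
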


\begin{proof} 
The first part of the proof is the same as in Theorem 4.2 in~\cite{BeckermannKressnerSchweitzer2017}: The exactness property~\cite[Theorem 3.2]{BeckermannKressnerSchweitzer2017} -- which also holds in the block case -- implies that for all $q \in \Pi_m$ we have $E_m(f) = E_m(f-q)$, therefore
\begin{align}
    \| E_m(f) \|_F & =  \| (f-q)(A + \vb\vc^*) - (f-q)(A) - U_m X_m(f-q) V_m^* \|_F \nonumber \\
    & \le \| (f-q)(A + \vb\vc^*) - (f-q)(A) \|_F + \| U_m X_m(f-q) V_m^* \|_F \nonumber \\
    & \le \| (f-q)(A + \vb\vc^*) - (f-q)(A) \|_F + \| X_m(f-q) \|_F. \label{eq:triang}
\end{align}
Moreover, by definition (line~\ref{line:defXm} in Algorithm~\ref{alg:polynomial_krylov}), we have $X_m(f-p) = [f(\tilde{\calA})]_{1,2}$, where $\tilde{\calA} := \begin{bmatrix} U_m^* A U_m & U_m^* \vb \vc^* V_m \\ 0 & V_m^*(A + \vb \vc^*) V_m \end{bmatrix}$.
We can now use Corollary~\ref{cor:difference} to get 
\begin{equation}\label{eq:deriv1}
\|(f-q)(A + \vb \vc^*) - (f-q)(A) \|_F \le (1 + \sqrt{2})^2 \|(f-q)'\|_{\bbE} \|\vb\vc^*\|_F.
\end{equation}
 and Lemma~\ref{lemma:12block} to get
\begin{equation*}
 \|X_m(f-q)\|_F \le (1 + \sqrt{2})^2 \|(f-q)'\|_{\bbE} \| U_m^* \vb \vc^* V_m \|_F \le (1 + \sqrt{2})^2 \|(f-q)'\|_{\bbE} \| \vb \vc^*\|_F,
\end{equation*}
because of the inclusions $W(U_m^* AU_m) \subseteq W(A)$ and $W(V_m^* (A + \vb \vc^*) V_m) \subseteq W(A + \vb \vc^*)$. 
Combining these with~\eqref{eq:triang} gives the result of the theorem, because $q' \in \Pi_{m-1}$ can be chosen arbitrarily.
\end{proof}

\begin{remark}\label{rmk:compare} \em
Let us compare the result of Theorem~\ref{thm:poly} with Theorem 4.2 in~\cite{BeckermannKressnerSchweitzer2017}, which establishes the upper bound 
$2(1 + \sqrt{2}) \inf_{p \in \Pi_{m}} \| f - p\|_{\tilde \bbE}$ for the error in the non-Hermitian case.
While this bound features a somewhat smaller constant and the approximation of $f$ instead of $f^\prime$, it comes with the major disadantvage that $\tilde \bbE$ needs to contain the numerical range of $\calA = \begin{bmatrix} A & \vb \vc^* \\ 0 & A + \vb \vc^* \end{bmatrix}$, which can be critically larger than the convex hull of $W(A)$ and $W(A + \vb \vc^*)$. Indeed, there are situations~\cite[Figure 6.2]{BeckermannKressnerSchweitzer2017}  in which  $W(\calA)$ contains a singularity of $f$ (and hence the bound becomes void) but the assumptions of Theorem~\ref{thm:poly} are still satisfied.
In order to deal with these situations, specialized techniques had to be developed to address the issue (see~\cite[Section 5]{BeckermannKressnerSchweitzer2017}), which can now be bypassed by Theorem~\ref{thm:poly}. %\textcolor{red}{??? Marcel, can you please check whether it is possible to rederive Theorem 5.7 from our old paper (up to some constant) using Theorem 4.3? The major obstable is to find a polynomial approximation result for derivatives of Markov function. In the worst case, we could ``cheat'' here a little and prove such a result by ourselves mimicking the proof of Theorem 5.7 in the scalar case and just report the result here.   ???}
\end{remark}

\subsection{Convergence analysis for rational Krylov subspaces}
%\textcolor{red}{??? Replace $\gamma$ by $\omega$ to be consistent with polynomial paper? ???}

In this section, we analyze the convergence of the proposed rational Krylov subspace method for updating matrix functions, both in the Hermitian and non-Hermitian case for certain classes of functions.
%From now on, $E_m(f)$ defined in~\eqref{eq:error} denotes the error obtained by Algorithm~\ref{alg:rationalkrylovupdate}.
% \textcolor{red}{??? It would be nicer to move the definition of the error and the formula $\tildeE(f) = f(A+\vb\vb^\ast) - f(A) - \tildeU(
%     f(\tildeG+U_m^\ast\vb\vb^\ast U_m) - f(\tildeG)
%     )\tildeU^\ast $ to the beginning of the whole section, because it seems to be needed for both, the polynomial and the rational case, and we are a bit repetitive. ???}

\subsubsection{Convergence analysis in the Hermitian case} \label{sec:convergencehermitian}

We first discuss the Hermitian case, that is, $A = A^\ast$ and $D = D^\ast$. 
%and $U_m = V_m$. \textcolor{red}{??? DK: Previously, there was the factorization of $D$ here. I am not sure we need it
%(that is, the poles are closed under complex conjugation). 
%This implies $H_m^\ast=\tildeG$ and, by~\eqref{eq:Xm_hermitian_difference}, 
%we may write the error~\eqref{eq:error} as
%\begin{equation*}
%    \tildeE(f) = f(A+D) - f(A) - \tildeU(
%    f(\tildeG+U_m^\ast D U_m) - f(\tildeG)
%    )\tildeU^\ast .
%\end{equation*}
%\textcolor{red}{??? DK: Do we actually make use of this equation. We now mention this simplifcation at three different places.. ???}
 The following theorem links this error to a rational approximation problem. We omit its proof because it follows from Theorem~\ref{the:rational_exactness} in a manner entirely analogous to the proof of Theorem 4.1 in~\cite{BeckermannKressnerSchweitzer2017}.

\begin{theorem}\label{the:convergence_rational}
Let $A$ and $D = \vb J \vb^*$  be Hermitian, let the set of poles be closed under complex conjugation, and let $U_m = V_m$ be an orthonormal basis of $q_{m}(A)^{-1}\spK_m(A,\vb)$. Furthermore, let $f$ be analytic in a domain $\bbE$ containing the union of $W(A)$ and $W(A+D)$. Then the error~\eqref{eq:error} returned by Algorithm~\ref{alg:rationalkrylovupdate} satisfies
\begin{equation}\label{eq:convergence_rational_hermitian}
\|\tildeE(f)\| \leq 4 \min_{r \in \Pi_{m}/q_{m}} \|f - r\|_\bbE,
\end{equation}
where $\|\cdot\|_\bbE$ denotes the supremum norm on $\bbE$.
\end{theorem}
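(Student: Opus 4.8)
The plan is to mimic the structure of the proof of Theorem~4.2 in the polynomial case (Theorem~\ref{thm:poly} above), but now leveraging the \emph{rational} exactness property from Theorem~\ref{the:rational_exactness} together with the fact that, in the Hermitian case, ordinary (nonderivative) bounds are available for spectral norms of differences of matrix functions. First I would fix an arbitrary $r \in \Pi_m/q_m$. By Theorem~\ref{the:rational_exactness} (and since all relevant resolvents are well defined because $\bbE$ avoids the singularities of $f$, hence in particular avoids $\xi_1,\dots,\xi_m$ if $r$ is well defined; poles are handled via Remark~\ref{remark:infinite}), the algorithm reproduces $r$ exactly, so $\tildeX(r) = r(A+D)-r(A)$ in the compressed sense and therefore
\[
\tildeE(f) = \tildeE(f-r) = (f-r)(A+D) - (f-r)(A) - \tildeU \tildeX(f-r)\tildeV^\ast .
\]
This reduces matters to bounding the two summands on the right.

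Next I would apply the triangle inequality, $\|\tildeE(f-r)\| \le \|(f-r)(A+D)-(f-r)(A)\| + \|\tildeU\tildeX(f-r)\tildeV^\ast\| = \|(f-r)(A+D)-(f-r)(A)\| + \|\tildeX(f-r)\|$, using that $\tildeU,\tildeV$ have orthonormal columns. For the first term, since $A$ and $A+D$ are Hermitian with spectra contained in $\bbE$, and since $g := f-r$ is continuous on $\bbE$, one has the elementary spectral-norm bound $\|g(A+D)-g(A)\| \le \|g(A+D)\| + \|g(A)\| \le 2\|g\|_\bbE$ (using that for a Hermitian matrix $M$ with $\Lambda(M)\subseteq\bbE$, $\|g(M)\| = \max_{\lambda\in\Lambda(M)}|g(\lambda)| \le \|g\|_\bbE$). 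The same reasoning applies to the compressed matrices: because $U_m=V_m$, the relation~\eqref{eq:Xm_hermitian_difference} gives $\tildeX(f-r) = g(\tildeG + U_m^\ast D U_m) - g(\tildeG)$, and both $\tildeG$ and $\tildeG + U_m^\ast D U_m = U_m^\ast(A+D)U_m$ are Hermitian with numerical range (= spectrum) contained in $W(A)\subseteq\bbE$ and $W(A+D)\subseteq\bbE$ respectively, so $\|\tildeX(f-r)\| \le 2\|g\|_\bbE$. Adding the two estimates yields $\|\tildeE(f)\| \le 4\|f-r\|_\bbE$, and taking the infimum over $r\in\Pi_m/q_m$ completes the argument.

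I expect the only real subtlety — and the reason the authors say the proof is "entirely analogous" rather than giving it — to be the bookkeeping around well-definedness: one must ensure that for the minimizing (or any competing) $r$, the quantities $r(A)$, $r(A+D)$, $r(\tildeG)$, $r(\tildeG+U_m^\ast D U_m)$ are all well defined so that Theorem~\ref{the:rational_exactness} may be invoked, and that infinite poles are accommodated via Remark~\ref{remark:infinite}. Since $\bbE$ contains all four relevant (Hermitian) spectra and $f$ is analytic on $\bbE$, any $r$ whose finite poles $\xi_s$ lie outside $\bbE$ is automatically fine; one may restrict the infimum to such $r$ without changing its value because rational functions with a pole inside $\bbE$ have $\|f-r\|_\bbE = \infty$ and are thus irrelevant. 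Everything else is the routine triangle-inequality-plus-spectral-calculus computation sketched above, which is why it is reasonable to omit it.
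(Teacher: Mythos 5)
Your proposal is correct and follows essentially the same route the paper intends: it invokes the rational exactness property (Theorem~\ref{the:rational_exactness}) to write $\tildeE(f)=\tildeE(f-r)$, then uses the triangle inequality together with the spectral bound $\|g(M)\|\le\|g\|_{\bbE}$ for Hermitian $M$ with spectrum in $\bbE$ (applied to $A$, $A+D$ and their compressions via~\eqref{eq:Xm_hermitian_difference}) to obtain the constant $4$, exactly mirroring the proof of Theorem~4.1 in~\cite{BeckermannKressnerSchweitzer2017} that the authors cite as the template. Your remarks on well-definedness and on discarding competitors $r$ with poles inside $\bbE$ are the right way to handle the only subtle points.
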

% \begin{proof}
% By Theorem~\ref{the:rational_exactness}, we have
% $$\tildeE(f) = \tildeE(f) - r(A+\vb\vb^\ast) + r(A) + \tildeU\tildeX(f)\tildeU^\ast = \tildeE(f-r)$$
% for any $r \in \Pi_m/q_{m-1}$. Therefore, using Crouzeix's theorem, we find
% \begin{eqnarray}
% \|\tildeE(f)\| &=& \|\tildeE(f-r)\| \nonumber\\
%                &=& \|(f-r)(A+\vb\vb^\ast) - (f-r)(A) - \tildeU\tildeX(f)\tildeU^\ast\|\nonumber\\
% 			   &\leq& \|(f-r)(A+\vb\vb^\ast\| - \|(f-r)(A)\| - \|\tildeU\tildeX(f)\tildeU\|\nonumber\\
% 			   &\leq& 2 \|f-r\|_\bbE + \|\tildeU\tildeX(f-r)\tildeU\|.\label{eq:convergence_proof_hermitian1}
% \end{eqnarray}
% Using~\eqref{eq:rational_krylov_update_hermitian} and~\eqref{eq:Xm_hermitian_difference_rational}, we have
% $$\tildeU\tildeX(f-r)\tildeU^\ast = \tildeU\big((f-r)(\tildeG+\|\vb\|^2\ve_1\ve_1^\ast)-(f-r)(\tildeG)\big)\tildeU^\ast.$$
% Thus, using the facts that $\tildeU$ has orthonormal columns, that $W(\tildeG) \subseteq W(A)$ and that $W(\tildeG+\|\vb\|^2\ve_1\ve_1^\ast) \subseteq W(A+\vb\vb^\ast)$, we can again apply Crouzeix's theorem to bound the second term in~\eqref{eq:convergence_proof_hermitian1} by $2 \|f-r\|_\bbE$. The assertion of the theorem then follows immediately.
% \end{proof}

Theorem~\ref{the:convergence_rational} allows us to derive convergence bounds for Algorithm~\ref{alg:rationalkrylovupdate} by considering rational uniform approximation problems on intervals $\bbE$ containing $ [\widetilde \lambda_{\min}, \widetilde \lambda_{\max}]$, where
\[
\widetilde \lambda_{\min} := \min \{\lambda_{\min}(A), \lambda_{\min}(A+D)  \},\quad \widetilde \lambda_{\max} := \max \{\lambda_{\max}(A), \lambda_{\max}(A+D)  \}.\] This problem has been addressed numerous times in the literature, e.g., in the context of analyzing rational Krylov subspace methods for approximating $f(A)b$; see, e.g.,~\cite{BeckermannReichel2009,Guettel2010,Guettel2013} and the references therein. In the following, we give several examples for the bounds obtained this way.

\paragraph{The exponential function}

Under the assumptions of Theorem~\ref{the:convergence_rational}, consider the exponential function $f(z)=\exp(x)$. %\textcolor{red}{??? The following sentence and the footnote need to be carefully adjusted once the Hermitian case is adjusted ???}
We will suppose in the following that the spectra of $A$ and $A+D$ (and the corresponding poles) have already been shifted\footnote{Such a shift would lead to an additional factor $\exp(\widetilde \lambda_{\max})$ in~\eqref{eq:convergence_rational_hermitian}.} such that $A$ and $A+D$ are negative semi-definite and thus one can choose $\bbE=(-\infty,0]$ in Theorem~\ref{the:convergence_rational}.

%\begin{example}\label{ex:exp1}
   From the seminal work of Gonchar and Rakhmanov~\cite{Gonchar1987} and its improvements established by Aptekarev~\cite{Aptekarev2002}
   it is known that for every integer $m$ there exists an optimal denominator $q_m\in \Pi_m$ such that
   $$
        \min_{r \in \Pi_{m}/q_{m}} \|\exp - r\|_{(-\infty,0]} \leq C \, \kappa^{-m}, \quad \kappa\approx 9.28903... .
   $$
   for some constant $C$ independent of $m$. The numerical values of the optimal poles (that is, the roots of $q_m$) are known.
   %and can be found, for certain values of $m$,  in the RK Toolbox \textcolor{red}{??? Is this 100\% true? Does it have exactly these pools by Gonchar et al.? [AC] I can't find the poles for the exponential in the toolbox, but only the ones for sqrt / invsqrt / sign  ???}
%\end{example}

We now consider the case of a single, repeated pole, which bears the advantage that only one sparse factorization needs to be computed when using a direct solver in Algorithm~\ref{alg:rationalkrylovupdate}.
%\begin{example}\label{ex:exp2}
   Andersson~\cite{Andersson1981} showed that, for $q_m(z)=(z-m/\sqrt{2})^m$,
$$
     \limsup_{m\to \infty} \Bigl( \min_{r \in \Pi_{m}/q_{m}} \| \exp - r \|_{L^\infty((-\infty,0])} \Bigr)^{1/m} = \frac{1}{1+\sqrt{2}}.
$$
%\end{example}
This agrees with observations from~\cite{MN04,vdEH06} that a well-chosen single pole $\xi$ repeated $m$ times already yields good convergence.

Strategies for choosing poles (adaptively) for finite intervals are surveyed in~\cite[Sec. 4.2]{Guettel2013}.

\paragraph{Markov functions}\label{sec_markov}
Under the assumptions of Theorem~\ref{the:convergence_rational}, let us now consider a \emph{Markov function}
%$f$ and a Hermitian positive definite matrix $A$. Markov functions can be written as
\begin{equation}\label{eq:markov}
f(x) = \int_\alpha^\beta \frac{\d\mu(z)}{x-z},
\end{equation}
where $\mu$ is a positive measure with support in the interval $[\alpha,\beta]$ with $-\infty \leq \alpha < \beta < \infty$. Important examples of Markov functions are inverse fractional powers
\begin{equation}\label{eq:inverse_fractional_powers}
    f(z)=z^{-\gamma} = \frac{\sin(\gamma\pi)}{\pi} \int_{-\infty}^0 \frac{(-x)^{-\gamma}\d x}{z-x}
\end{equation}
for $\gamma \in (0,1)$, or
\begin{equation}\label{eq:logarithm_rational}
    f(z)=\frac{1}{z}\log(1+z) = \int_{-\infty}^{-1} \frac{(-1/x)\d x}{z-x}.
\end{equation}
For more details on Markov functions and further examples we refer the reader to~\cite{BergForst1975,Henrici1977}. A detailed discussion of rational approximation of Markov functions can be found in~\cite[Section 6]{BeckermannReichel2009}. From~\cite[Theorem 6.1(b)]{BeckermannReichel2009} we quote the following estimate.

\begin{theorem}\label{the:estimate_rational_Markov}
   Let $\bbE$ be a compact convex set, symmetric with respect to the real axis, and let $f$ be a Markov function \eqref{eq:markov} such that
   \begin{equation} \label{eq_assumtion_markov}
           \beta < \omega:=\min \bbE \cap \mathbb R.
   \end{equation}
   Let $\psi$ denote the conformal map from $\barC \setminus \bbD$ onto $\barC \setminus \bbE$ normalized such that $\psi(\infty) = \infty, \psi^\prime(\infty) > 0$, where $\barC = \C \cup \{\infty\}$ denotes the extended complex plane and $\bbD$ denotes the closed unit disk, and let $\phi$ denote its inverse map from $\barC \setminus \bbE$ onto $\barC \setminus \bbD$.
   Then
   $$
        \min_{r \in \Pi_{m}/q_{m}} \| f - r \|_{L^\infty(\bbE)}
        \leq \frac{2 \| f \|_{L^\infty(\mathbb E)}}{|\phi(\beta)|} \cdot \eta_m, \quad \eta_m := \max_{x \in [\phi(\alpha),\phi(\beta)]} \frac{1}{|B_m(x)|},
   $$
   with the Blaschke product
   $$
     B_m(x) := \prod_{j=1}^m \frac{1-x\overline{\phi(\xi_j)}}{x-\phi(\xi_j)}.
   $$
\end{theorem}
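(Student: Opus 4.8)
The plan is to reduce the $L^\infty$ rational approximation of the Markov function $f$ on $\bbE$ to a rational approximation problem that can be solved explicitly by interpolation and then estimated via a contour integral. The key idea is that $f$ itself is an average of simple resolvent functions $x\mapsto (x-z)^{-1}$ for $z\in[\alpha,\beta]$, so it suffices to control the error of a single clever rational interpolant at the prescribed poles and then integrate against $\d\mu(z)$. The relevant rational interpolant is the one obtained by interpolating $f$ at the $m$ points that are mapped (under $\phi$) to the reflections of the poles $\phi(\xi_j)$ across the unit circle, i.e. at the ``conjugate reciprocal'' nodes — this is the standard construction behind the Blaschke product $B_m$. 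Concretely, the numerator $p\in\Pi_m$ is chosen so that $r=p/q_m$ interpolates $f$ at these $m$ nodes; the interpolation error then has a representation involving the node polynomial divided by $q_m$, which after transplantation to the unit disk via $\psi$ becomes exactly the Blaschke product $B_m$.

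First I would transplant everything to the unit disk: write $F(w) := f(\psi(w))$ for $w\in\barC\setminus\bbD$, which is analytic there (since $f$ is analytic off $[\alpha,\beta]$ and $\psi$ maps the exterior of $\bbD$ onto the exterior of $\bbE\supseteq[\omega,\cdot)\supseteq\{$a neighbourhood of $[\alpha,\beta]$ by \eqref{eq_assumtion_markov}$\}$). Using the Markov representation, $F(w)=\int_\alpha^\beta (\psi(w)-z)^{-1}\,\d\mu(z)$. Next, for the rational interpolant $r$ at the chosen nodes, I would use the Hermite/Walsh contour-integral formula for the interpolation error, which on the disk side takes the form
\begin{equation*}
f(x)-r(x) = \frac{1}{2\pi i}\oint_\Gamma \frac{B_m(\phi(x))}{B_m(\phi(t))}\cdot\frac{f(t)}{t-x}\,\d t,
\end{equation*}
with $\Gamma$ a contour in $\bbE$-exterior separating $[\alpha,\beta]$ from $\bbE$; then I would collapse $\Gamma$ onto the cut $[\alpha,\beta]$ using the jump of $f$, which is governed by $\d\mu$. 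This yields
\begin{equation*}
f(x)-r(x) = B_m(\phi(x))\int_\alpha^\beta \frac{\d\mu(z)}{B_m(\phi(z))\,(x-z)}.
\end{equation*}
Taking absolute values, using $|B_m(\phi(x))|\le 1$ for $x\in\bbE$ (the Blaschke product is bounded by $1$ outside $\bbD$ — here I must double-check orientation: on $\bbE$, $\phi(x)$ lies on or outside the unit circle, and the standard estimate gives $|B_m|\le 1$ there, whereas on $[\alpha,\beta]$ we have $\phi(z)\in[\phi(\alpha),\phi(\beta)]$ inside the disk where $|B_m(\phi(z))|$ is large), one bounds $1/|B_m(\phi(z))|\le \eta_m$ uniformly for $z\in[\alpha,\beta]$ and pulls it out of the integral. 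What remains is $\eta_m\cdot\bigl|\int_\alpha^\beta \d\mu(z)/(x-z)\bigr| = \eta_m\,|f(x)|\le \eta_m\|f\|_{L^\infty(\bbE)}$ — but this is off by the factor $2/|\phi(\beta)|$, so the collapsing-the-contour step actually produces an extra harmless factor from the Cauchy kernel evaluated near the endpoint $\beta$, which is where the $|\phi(\beta)|^{-1}$ and the $2$ come from. I would extract that constant carefully from the residue/jump computation at $t=\beta$, which is the point of the interval closest to $\bbE$.

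The main obstacle I expect is getting the constant $2/|\phi(\beta)|$ exactly right and justifying the contour manipulations rigorously when $\alpha=-\infty$ or when $\mu$ has mass near the endpoints. Handling $\alpha=-\infty$ requires either a limiting argument (truncate to $[\alpha_N,\beta]$ and let $\alpha_N\to-\infty$, using that $\eta_m$ and $\|f\|$ are finite by the decay of $\psi(w)-z$) or a direct estimate exploiting that $\phi$ maps $-\infty$ to a finite point in the disk. The cleanest route is to cite the proof of \cite[Theorem 6.1(b)]{BeckermannReichel2009} verbatim, since the statement here is exactly that result; a self-contained argument would follow the interpolation-plus-contour scheme above, with the delicate point being the uniform bound on the Blaschke product on $\bbE$ versus its growth on $[\phi(\alpha),\phi(\beta)]$, which is precisely the quantity $\eta_m$. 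Since the theorem is quoted rather than reproved in the paper, I would simply reference \cite{BeckermannReichel2009} and, if a sketch is desired, outline the three steps above: transplant to the disk, write the interpolation error as a Blaschke-weighted integral over the cut, and estimate.
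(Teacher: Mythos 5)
The paper does not prove this theorem at all: it is quoted verbatim from Theorem~6.1(b) of \cite{BeckermannReichel2009} (``From \cite[Theorem 6.1(b)]{BeckermannReichel2009} we quote the following estimate''), so your final recommendation---simply cite that reference---is exactly what the paper does. Your sketched argument (transplant to the disk, interpolate at the nodes reflected through the unit circle, represent the error via the Hermite--Walsh formula and collapse the contour onto the cut $[\alpha,\beta]$) is a reasonable outline of the strategy used in that reference, though your account of where the constant $2/|\phi(\beta)|$ comes from is admittedly speculative and would need to be pinned down if a self-contained proof were required.
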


For estimating $\|\tildeE(f)\|$ for Markov functions $f$, we may therefore combine Theorem~\ref{the:convergence_rational} with Theorem~\ref{the:estimate_rational_Markov} for $\bbE=[\widetilde\lambda_{\min},\widetilde\lambda_{\max}]$, as long as $\beta < \omega=\widetilde\lambda_{\min}$. In this case, explicit formulas for the conformal maps $\phi,\psi$ are available.  Noting that only the convergence factor $\eta_m$ depends on the poles $\xi_1,...,\xi_m$, it remains to derive upper bounds on $\eta_m$ for particular choices of poles.

According to~\cite[Corollary 6.4]{BeckermannReichel2009}, we may minimize $\eta_m$ among all single, repeated poles $\xi=\xi_1=...=\xi_m$ by setting
$$\sigma = \frac{\phi(\beta)-\phi(\alpha)}{\phi(\beta)\phi(\alpha)-1},\quad y_{\text{opt}} = -\frac{1}{\sigma} - \sqrt{\frac{1}{\sigma^2}-1}, \quad w = \frac{1+\phi(\alpha)y_{\text{opt}}}{\phi(\alpha)+y_{\text{opt}}},$$
resulting in the optimal pole $\xi = \psi(w)$ and $\eta_m=|y_{\text{opt}}|^{-m}$.

%\begin{example}
%We begin by considering rational Krylov subspaces that only use a single, repeated pole. They have the advantage that only one sparse factorization needs to be computed when using a direct solver in Algorithm~\ref{alg:rationalkrylovupdate}.

%Using~\cite[Corollary 6.4]{BeckermannReichel2009}, we can derive an explicit formula for the optimal repeated pole and the resulting convergence rate: The set $\bbE = [\lmin(A),\lmax(A+\vb\vb^\ast]$, where $\lmin(\cdot),\lmax(\cdot)$ denote the smallest and largest eigenvalues, fulfills the requirements of Theorem~\ref{the:convergence_rational}. Let $\psi$ denote the conformal map from $\barC \setminus \bbD$ onto $\barC \setminus \bbE$ normalized such that $\psi(\infty) = \infty, \psi^\prime(\infty) > 0$, where $\barC = \C \cup \{\infty\}$ denotes the extended complex plane and $\bbD$ denotes the closed unit disk, and let $\psi$ denote its inverse map from $\barC \setminus \bbE$ onto $\barC \setminus \bbD$. By setting $$\sigma = \frac{\phi(\beta)-\phi(\alpha)}{\phi(\beta)\phi(\alpha)-1},\quad y_{\text{opt}} = -\frac{1}{\sigma} - \sqrt{\frac{1}{\sigma^2}-1}, \quad w = \frac{1+\phi(\alpha)y_{\text{opt}}}{\phi(\alpha)+y_{\text{opt}}},$$ the optimal pole is given by $\xi = \psi(w)$. The resulting error $\|f-r_{m}\|_\bbE$ for $r_{m} \in \Pi_{m}/(z-\xi)^{m}$ decreases geometrically with rate $|y_{\text{opt}}|^{-m}$.

In the important special case $\alpha = -\infty, \beta = 0$, which occurs, e.g., for inverse fractional powers~\eqref{eq:inverse_fractional_powers}, the above formulas simplify and we obtain the pole $\xi = -\sqrt{\widetilde\lambda_{\max} \cdot \widetilde \lambda_{\min}}$ and the corresponding convergence rate
\begin{equation}\label{eq:convergence_rate_markov_simple_pole}
   \eta_m = %|y_{\text{opt}}|^{-m} =
   \left(\frac{\sqrt[4]{\widetilde\lambda_{\max} /\widetilde \lambda_{\min}}-1}{\sqrt[4]{\widetilde\lambda_{\max} /\widetilde \lambda_{\min}}+1}\right)^m.
\end{equation}

Let us note that, asymptotically, the convergence rate~\eqref{eq:convergence_rate_markov_simple_pole} is also attained when alternatingly choosing the poles $0$ and $\infty$, i.e., when using \emph{extended Krylov subspaces}~\cite{BeckermannReichel2009,KnizhnermanSimoncini2010}.

\begin{figure}
\begin{center}
\pgfplotsset{height=0.45\linewidth,width=0.95\linewidth,compat=1.10,every axis/.append style={legend style={/tikz/every even column/.append style={column sep=6pt}}}}

\noindent%
\begin{tikzpicture}[scale=1]%
    \begin{semilogyaxis}[legend style={at={(1,1.15)}}, 
   	anchor= north east, legend columns=2, xmin=0, xmax=200,grid=major, 
   	xlabel={$m$}, ylabel={Error norm}]

\addplot[color=NavyBlue,very thick]
table [x ={x},y ={err}] {figs/experiment1.dat};\addlegendentry{Exact error norm}
 
  \addplot[color=BurntOrange,very thick,dashed]
table [x ={x},y ={est}]{figs/experiment1.dat} node [pos=0,left] {}; \addlegendentry{Estimated convergence rate}
    \end{semilogyaxis}
\end{tikzpicture}
\end{center}
\caption{Convergence of $\|E_m(f)\|$ for Algorithm~\ref{alg:rationalkrylovupdate} with a single, repeated, asymptotically optimal pole, and estimated convergence rate~\eqref{eq:convergence_rate_markov_simple_pole} for approximating $(A+\vbv\vbv^\ast)^{-1/2}-A^{-1/2}$ with $A,\vbv$ as in Example~\ref{example:1}.}
\label{fig:invsqrt_simple_pole}
\end{figure}
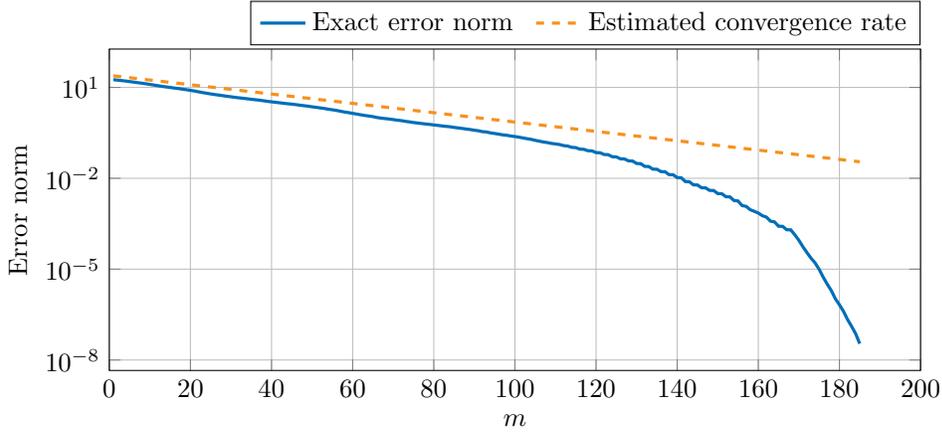

\begin{example} \label{example:1}
	We illustrate the above results by a simple numerical experiment, using a diagonal matrix $A \in \C^{200 \times 200}$ with logarithmically spaced eigenvalues in the interval $[10^{-3},10^3]$ and $D = \vbv\vbv^\ast$ where $\vbv$ is a random vector with $\|\vbv\| = 100$. This leads to $\widetilde\lambda_{max} \approx 1.0078 \cdot 10^4$, and thus $\widetilde\lambda_{max}/\widetilde\lambda_{min}) \approx 1.0078\cdot 10^7$. Figure~\ref{fig:invsqrt_simple_pole} displays the convergence of Algorithm~\ref{alg:rationalkrylovupdate} with all poles equal to $-\sqrt{\widetilde\lambda_{max}\cdot\widetilde\lambda_{min}}$ for approximating $(A+\vbv\vbv^\ast)^{-1/2}-A^{-1/2}$. In the initial phase, the error reduces linearly and the convergence rate of the method is predicted quite accurately by~\eqref{eq:convergence_rate_markov_simple_pole}. The superlinear convergence phase starting around iteration $120$ can of course not be captured by~\eqref{eq:convergence_rate_markov_simple_pole}. \hfill$\diamond$
\end{example}

We now turn to rational approximations using several different poles. In~\cite[Section 6.2]{BeckermannReichel2009}, quasi-optimal poles are constructed that admit closed formulas in terms of Jacobi elliptic functions. Using these poles, 
\begin{equation}\label{eq:rational_approximation_quasioptimal}
\eta_m \leq %\|f-r_m\|_\bbE \leq \frac{2\|f\|_\bbE}{|\phi(\beta)|}
2 \exp\left(-m \frac{\pi^2}{\log(16\widetilde \lambda_{\max} / \widetilde \lambda_{\min})}\right).
\end{equation}
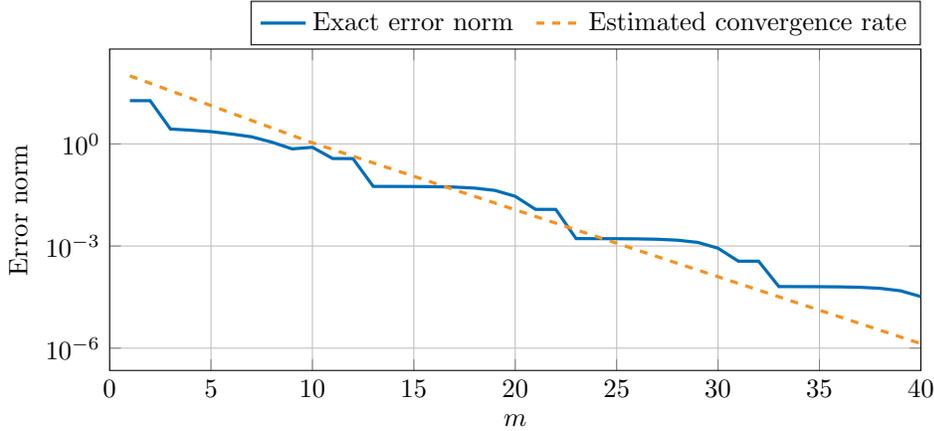
\begin{figure}
\begin{center}
\pgfplotsset{height=0.45\linewidth,width=0.95\linewidth,compat=1.10,every axis/.append style={legend style={/tikz/every even column/.append style={column sep=6pt}}}}

\noindent%
\begin{tikzpicture}[scale=1]%
    \begin{semilogyaxis}[legend style={at={(1,1.15)}}, 
   	anchor= north east, legend columns=2, xmin=0, xmax=40,grid=major, 
   	xlabel={$m$}, ylabel={Error norm}]

\addplot[color=NavyBlue,very thick]
table [x ={x},y ={err}] {figs/experiment2.dat};\addlegendentry{Exact error norm}
 
  \addplot[color=BurntOrange,very thick,dashed]
table [x ={x},y ={est}]{figs/experiment2.dat} node [pos=0,left] {}; \addlegendentry{Estimated convergence rate}
    \end{semilogyaxis}
\end{tikzpicture}
\end{center}
\caption{
Convergence of $\|E_m(f)\|$ for Algorithm~\ref{alg:rationalkrylovupdate} with 10 quasi-optimal, cyclically repeated poles, and estimated convergence rate~\eqref{eq:rational_approximation_quasioptimal_cyclic} for approximating $(A+\vbv\vbv^\ast)^{-1/2}-A^{-1/2}$ with $A,\vbv$ as in Example~\ref{example:1}.}
\label{fig:invsqrt_quasioptimal}
\end{figure}%
	Thus, the rate of convergence now depends on the \emph{logarithm} of the ratio $\widetilde \lambda_{\max}/\widetilde \lambda_{\min}$ instead of the fourth root. The corresponding poles are mutually distinct and, in turn, the rational Arnoldi method requires to compute a new Cholesky decomposition in each of the $m$ iterations. As already mentioned in Section~\ref{sec:rational}, it is preferable in practice to use a smaller number of poles and repeat them (typically cyclically) in order to limit the number of matrix factorizations that need to be computed. When using $\widetilde{m}$ quasi-optimal poles and repeating each of them $k$ times, the error bound~\eqref{eq:rational_approximation_quasioptimal} changes to
\begin{equation}\label{eq:rational_approximation_quasioptimal_cyclic}
    \eta_{m} \leq 2^k
    %\|f-r_{k m}\|_\bbE  \leq \frac{2^k \|f\|_\bbE}{|\phi(\beta)|}
	\exp\left(-k\widetilde{m} \frac{\pi^2}{\log(16\widetilde \lambda_{\max} / \widetilde \lambda_{\min})}\right),\quad m = k \widetilde{m}.
\end{equation}
Thus, compared to using $m$ (mutually distinct) quasi-optimal poles, the error bound worsens by a factor $2^{k-1}$.

We repeat the experiment from Example~\ref{example:1}, now using ten cyclically repeated, quasi-optimal poles in Leja ordering~\cite{Reichel1990}. Figure~\ref{fig:invsqrt_quasioptimal} displays the resulting convergence. The overall convergence rate is again predicted quite accurately, although the actual convergence curve shows a staircase-like behavior (which is typical for rational Krylov methods with poles in Leja ordering).
%\end{example}

Other, practically relevant functions like the \emph{matrix square root} are obtained as slight modifications of Markov functions.

\begin{example}\label{ex:square_root} 
Let us consider functions of the form
\begin{equation}\label{eq:markov_modification}
f(z) = z\widehat{f}(z),
\end{equation}
where $\widehat{f}$ is a Markov function~\eqref{eq:markov}. 
This includes the square root $z^{1/2} = zz^{-1/2}$ as well as the logarithm $\log(1+z) = z\frac{\log(1+z)}{z}$. The following simple trick allows us to apply 
Theorem~\ref{the:estimate_rational_Markov} to this setting. Fixing the pole $\xi_m=\infty$, which gives $q_m=q_{m-1} \in \Pi_{m-1}$, and setting $p_1(z)=z$ we obtain
\begin{eqnarray*} 
    \min_{r \in \Pi_{m}/q_{m}} \| f - r \|_{L^\infty(\bbE)}
    &\leq &
    \min_{r \in \Pi_{m-1}/q_{m}} \| \widehat f - r \|_{L^\infty(\bbE)} \,
    \| p_1 \|_{L^\infty(\bbE)}
    \\&= &
    \min_{r \in \Pi_{m-1}/q_{m-1}} \| \widehat f - r \|_{L^\infty(\bbE)} \,
    \| p_1 \|_{L^\infty(\bbE)} .
\end{eqnarray*}
That is, besides the additional factor $\| p_1 \|_{L^\infty(\bbE)}$, we obtain an upper bound for $\tildeE(f)$ by combining Theorem~\ref{the:convergence_rational} for $m,f$ with Theorem~\ref{the:estimate_rational_Markov} for $m-1,\widehat f$. A similar technique has been used in~\cite{FrommerGuettelSchweitzer2014} in the context of convergence theory for restarted (polynomial) Krylov methods for $f(A)\vbv$ when $A$ is Hermitian positive definite. In that situation, $\| p_1 \|_{L^\infty(\bbE)} = \lmax$.
\hfill$\diamond$
\end{example}

%\begin{example}\label{ex:exponential} Finally, let us briefly discuss the case of the exponential, $f(z) = \exp(-z)$, for a Hermitian positive definite matrix $A$. In this case, results from~\cite{MN04,vdEH06} imply that a well-chosen single pole $\xi < 0$ repeated $m$ times already yields good convergence. Strategies for choosing more than one pole (adaptively) are surveyed in~\cite[Sec. 4.2]{Guettel2013}. ??? Include Andersson 1981 specific rate ???
%\end{example}

\subsubsection{Convergence analysis for Markov functions in the non-Hermitian case}

We now turn to the more difficult task of analyzing the convergence for general $A$, $D = \vb \vc^\star$, in terms of a convex and compact set $\mathbb E$ containing both numerical ranges $W(A)$ and $W(A+\vb\vc^*)$, and $f$ being analytic in $\mathbb E$. In principle, Theorem~\ref{thm:poly} also holds for rational Krylov subspaces, by replacing $p$ with the derivative of a function in $\Pi_m / q_m $. However, due to the special form of such a derivative, the resulting optimization problem appears to be too exotic to be of assistance in getting practical convergence bounds. Therefore, inspired by \cite[\S 5.1]{BeckermannKressnerSchweitzer2017}, we consider the shifted (block) linear systems
\begin{equation}\label{eq:shifted_systems}
(zI - A)\vx(z) = \vb \text{ and } (zI - A -  \vb\vc^\ast)^\ast \vy(z) = \vc,
\end{equation}
together with the rational block FOM approximations for~\eqref{eq:shifted_systems}, given by
\begin{eqnarray*}
\vx_m(z) &:=& U_m (zI-G_m)^{-1}U_m^\ast \vb, \nonumber\\
 \vy_m(z) &:=& V_m(\bar z I - H_m - V_m^\ast \vc  \vb^\ast V_m)^{-1}V_m^\ast \vc.
\end{eqnarray*}
The following result links these quantities to the approximation error of low-rank updates.
\begin{lemma}\label{lem:contour}
   With $\Gamma$ a contour surrounding $\mathbb E$ once and sufficiently close to $\mathbb E$, the error defined in~\eqref{eq:error} satisfies
   $$
         \tildeE(f) = \frac{1}{2\pi i} \int_\Gamma f(z) \big(\vx(z)\vy(z)^\ast -  \vx_m(z)\vy_m(z)^\ast\big) \d z.
   $$
\end{lemma}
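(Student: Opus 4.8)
The plan is to express everything via the Cauchy integral formula for matrix functions and then reduce the right-hand side of the identity to the difference $f(A+\vb\vc^\ast)-f(A)-\tildeU\tildeX(f)\tildeV^\ast$ term by term. First I would recall that, since $f$ is analytic on $\mathbb{E}$ and $\Gamma$ surrounds $\mathbb{E}$ (hence also $W(A)$, $W(A+\vb\vc^\ast)$, $W(G_m)$, and $W(H_m^\ast + V_m^\ast\vb\vc^\ast V_m)$, the latter two being contained in the former by the inclusion of numerical ranges under compression), all four matrix functions $f(A)$, $f(A+\vb\vc^\ast)$, $f(G_m)$, and $f(H_m^\ast + \tildeV^\ast\vb\vc^\ast\tildeV)$ admit the contour representation $f(M) = \frac{1}{2\pi i}\int_\Gamma f(z)(zI-M)^{-1}\d z$. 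I would therefore write
\[
\tildeE(f) = \frac{1}{2\pi i}\int_\Gamma f(z)\Big[(zI-A-\vb\vc^\ast)^{-1} - (zI-A)^{-1} - \tildeU\,\tildeX_z\,\tildeV^\ast\Big]\d z,
\]
where $\tildeX_z$ is the $(1,2)$ block of the inverse of the $2m\times 2m$ block matrix $zI$ minus the block matrix appearing in Algorithm~\ref{alg:rationalkrylovupdate}; this uses that forming the $(1,2)$ block commutes with the contour integral.

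Next I would identify the two pieces inside the brackets with the quantities in the statement. For the first piece, the resolvent identity gives $(zI-A-\vb\vc^\ast)^{-1} - (zI-A)^{-1} = \vx(z)\vy(z)^\ast$ with $\vx(z) = (zI-A)^{-1}\vb$ and $\vy(z)^\ast = \vc^\ast(zI-A-\vb\vc^\ast)^{-1}$ — this is exactly the rank-$\ell$ update of the inverse, and it matches the definitions of the shifted systems~\eqref{eq:shifted_systems}. For the second piece I would compute the $(1,2)$ block of the inverse of
\[
\begin{bmatrix} zI_m - G_m & -U_m^\ast\vb\vc^\ast V_m \\ 0 & \bar z I_m - H_m - V_m^\ast\vc\vb^\ast V_m \end{bmatrix}
\]
(noting $H_m^\ast + \tildeV^\ast\vb\vc^\ast V_m$ has conjugate transpose $H_m + V_m^\ast\vc\vb^\ast V_m$) by the standard formula for the inverse of a block upper-triangular matrix, obtaining
\[
\tildeX_z = (zI_m-G_m)^{-1}\,U_m^\ast\vb\vc^\ast V_m\,(\bar z I_m - H_m - V_m^\ast\vc\vb^\ast V_m)^{-\ast},
\]
and hence $\tildeU\,\tildeX_z\,\tildeV^\ast = \vx_m(z)\vy_m(z)^\ast$ after recognizing $\vx_m(z) = U_m(zI-G_m)^{-1}U_m^\ast\vb$ and $\vy_m(z) = V_m(\bar z I - H_m - V_m^\ast\vc\vb^\ast V_m)^{-1}V_m^\ast\vc$ exactly as defined before the lemma. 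Substituting both identifications into the contour integral yields the claimed formula.

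The main obstacle I anticipate is a bookkeeping one rather than a conceptual one: being careful with conjugate transposes in the second block row (the block matrix has $H_m^\ast$, whereas the FOM approximation $\vy_m$ is phrased in terms of $H_m$ and $V_m^\ast\vc\vb^\ast V_m$), and confirming that the $(1,2)$ block of the $2m\times 2m$ resolvent really is $(zI_m - G_m)^{-1}(U_m^\ast\vb\vc^\ast V_m)(\,\cdot\,)^{-1}$ with the correct sign, so that it lines up with $\vx_m(z)\vy_m(z)^\ast$ and not its negative. One should also check that $\Gamma$ can be taken close enough to $\mathbb{E}$ that none of the four matrices has an eigenvalue on or outside $\Gamma$ — this is where the hypothesis that $\mathbb{E}$ contains $W(A)$ and $W(A+\vb\vc^\ast)$, together with $W(G_m)\subseteq W(A)$ and $W(H_m^\ast + \tildeV^\ast\vb\vc^\ast\tildeV)\subseteq W(A+\vb\vc^\ast)$, is used, guaranteeing all resolvents are analytic in a neighborhood of the region between $\Gamma$ and $\mathbb{E}$. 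Everything else is a direct consequence of the Cauchy integral representation and the two resolvent identities.
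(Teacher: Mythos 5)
Your argument is correct and is essentially the derivation the paper relies on: the paper's proof simply cites \cite[\S 5.1]{BeckermannKressnerSchweitzer2017}, where the identity is obtained in exactly this way (Cauchy integral representation of all four matrix functions, the second resolvent identity giving $(zI-A-\vb\vc^\ast)^{-1}-(zI-A)^{-1}=\vx(z)\vy(z)^\ast$, and the block upper-triangular inverse giving $\tildeU\tildeX_z\tildeV^\ast=\vx_m(z)\vy_m(z)^\ast$), with the observation that nothing in the computation uses the polynomial structure of the subspaces. Your bookkeeping of the conjugate transposes and the inclusion of the compressed numerical ranges in $\mathbb{E}$ is accurate, so the proposal fills in precisely the details the paper leaves to the reference.
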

\begin{proof}
This result has been derived in~\cite[\S 5.1]{BeckermannKressnerSchweitzer2017} in the context of polynomial Krylov subspaces, but it is straightforward to verify that the derivations are valid for general choices of subspaces.
%     Using the Cauchy integral formula, we have
%     \begin{eqnarray*}
%       f(A + \vb\vc^\ast) - f(A)
%       &=& \frac{1}{2 \pi i}\int_\Gamma f(z) \bigl( (zI-A-\vb \vc^\ast)^{-1} - (zI-A)^{-1} \bigr) \d z =
%       \\&=& \frac{1}{2 \pi i}\int_\Gamma f(z) \vx(z)\vy(z)^\ast \d z
%     \end{eqnarray*}
%     and, similarly,
%     \begin{eqnarray*}
%         U_m X_m(f) V_m^* &=&
%          \frac{1}{2 \pi i}\int_\Gamma f(z)
%         U_m \left( \left( zI-  \begin{bmatrix}
% G_m &  (U_m^\ast \vb) (V_m^\ast \vc)^\ast  \\
% 0 & H_m^\ast + (V_m^\ast \vb) (V_m^\ast \vc)^\ast
% \end{bmatrix} \right)^{-1} \right)_{1,2} V_m^\ast \, \d z
%       \\&=& \frac{1}{2 \pi i}\int_\Gamma f(z) \vx_m(z)\vy_m(z)^\ast \d z.
%     \end{eqnarray*}
%     Inserting both expressions into~\eqref{eq:error} establishes the result.
\end{proof}

Lemma~\ref{lem:contour} shows that $\|\tildeE(f)\|$ is small if the rational FOM approximation errors $\vx(z)-  \vx_m(z)$ and $\vy(z)-  \vy_m(z)$ are small, uniformly for $z\in \Gamma$. The analysis is complicated by this dependence on $\Gamma$. Therefore, in what follows we will only consider the particular case \eqref{eq:markov} of a Markov function $f$, which allows us to switch from $\Gamma$ to the interval $[\alpha,\beta]$.

\begin{theorem}\label{the:unsymmetric_Markov}
   Let $\mathbb E$ be a convex and compact set, symmetric with respect to the real axis, and containing both numerical ranges $W(A)$ and $W(A+\vb\vc^*)$. Let $\omega$ and $\eta_m$ be defined as in Theorem~\ref{the:estimate_rational_Markov}.
   Then for a Markov function $f$ satisfying~\eqref{eq_assumtion_markov}, the error~\eqref{eq:error} returned by Algorithm~\ref{alg:rationalkrylovupdate} satisfies
   $$
        \| \tildeE(f) \| \leq 8 \, |f'(\omega)|\,
        \frac{\eta_m}{1-\eta_m} \, \| \vb \|\, \| \vc \|.
   $$
\end{theorem}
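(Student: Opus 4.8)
The plan is to combine Lemma~\ref{lem:contour} with the Markov representation~\eqref{eq:markov} in order to reduce the bound on $\|\tildeE(f)\|$ to bounds on the rational FOM errors $\vx(z) - \vx_m(z)$ and $\vy(z) - \vy_m(z)$ evaluated at real points $z = w \in [\alpha,\beta]$. First I would insert $f(z) = \int_\alpha^\beta \frac{\d\mu(w)}{z-w}$ into the contour integral formula of Lemma~\ref{lem:contour} and swap the order of integration (justified since $[\alpha,\beta]$ lies outside $\Gamma$ and the integrand is uniformly bounded on $\Gamma$). For fixed $w \in [\alpha,\beta]$, the inner contour integral $\frac{1}{2\pi i}\int_\Gamma \frac{1}{z-w}\big(\vx(z)\vy(z)^\ast - \vx_m(z)\vy_m(z)^\ast\big)\d z$ can be evaluated by the residue theorem: since $w$ is the only pole of $\frac{1}{z-w}$ inside the region bounded by $\Gamma$ (the functions $\vx,\vy,\vx_m,\vy_m$ are analytic on and inside $\Gamma$ because $W(A), W(A+\vb\vc^*)$ and the spectra of the compressed matrices all lie in $\mathbb E$, which $\Gamma$ surrounds), this picks up $\vx(w)\vy(w)^\ast - \vx_m(w)\vy_m(w)^\ast$ with a sign. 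Hence
\[
\tildeE(f) = \pm\int_\alpha^\beta \big(\vx(w)\vy(w)^\ast - \vx_m(w)\vy_m(w)^\ast\big)\d\mu(w),
\]
so that
\[
\|\tildeE(f)\| \le \int_\alpha^\beta \|\vx(w)\vy(w)^\ast - \vx_m(w)\vy_m(w)^\ast\|\d\mu(w).
\]

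Next I would control the integrand by the triangle inequality after the standard splitting
\[
\vx\vy^\ast - \vx_m\vy_m^\ast = (\vx - \vx_m)\vy^\ast + \vx_m(\vy - \vy_m)^\ast,
\]
giving $\|\vx\vy^\ast - \vx_m\vy_m^\ast\| \le \|\vx - \vx_m\|\,\|\vy\| + \|\vx_m\|\,\|\vy - \vy_m\|$. The key ingredient is a bound on the rational FOM error for the shifted systems~\eqref{eq:shifted_systems}. Here I expect to invoke a result of the type proved in~\cite{BeckermannReichel2009} (essentially the same estimate that underlies Theorem~\ref{the:estimate_rational_Markov}): for $w < \omega = \min\mathbb E \cap \R$ and a shifted system $(wI - A)\vx(w) = \vb$ with $W(A) \subset \mathbb E$, the rational FOM iterate $\vx_m(w) = U_m(wI - G_m)^{-1}U_m^\ast\vb$ satisfies a bound of the form $\|\vx(w) - \vx_m(w)\| \le \frac{C\,\eta_m}{|w - \omega|}\,\|\vb\|$ with a modest constant $C$ (and analogously for $\vy$, using $W(A+\vb\vc^*)\subset\mathbb E$ and the fact that $V_m$ spans a rational Krylov space for $A^\ast$). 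One then also needs $\|\vx_m(w)\|\le \|\vx(w)\| + \|\vx(w)-\vx_m(w)\|$ and the crude bound $\|\vx(w)\|\le \|\vb\|/\dist(w,W(A)) \le \|\vb\|/|w-\omega|$ from the numerical-range resolvent estimate (likewise for $\vy$). Assembling these, the integrand is bounded by something like $\frac{C'\,\eta_m}{(1-\eta_m)|w-\omega|^2}\|\vb\|\|\vc\|$, and then $\int_\alpha^\beta \frac{\d\mu(w)}{|w-\omega|^2} = |f'(\omega)|$ (differentiating the Markov representation: $f'(\omega) = \int_\alpha^\beta \frac{\d\mu(w)}{(\omega - w)^2}$, and $\omega - w > 0$ on $[\alpha,\beta]$). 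Tracking constants carefully should produce the factor $8$.

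The main obstacle I anticipate is getting the precise rational FOM error estimate in the non-Hermitian (numerical-range) setting with the correct constants so that the $\eta_m/(1-\eta_m)$ form and the factor $8$ emerge cleanly. In the Hermitian case one has clean spectral bounds, but here one must work with $W(A)$ and $W(A+\vb\vc^*)$, so the resolvent bounds carry factors like $1/\dist(w, W(\cdot))$, and the FOM error bound must be phrased via a Faber/Blaschke-type argument on $\mathbb E$ rather than via eigenvalue localization; one has to be careful that the same $\eta_m$ (with the same poles, now for $A^\ast$ via the conjugated denominator $\bar q_m$) governs both $\vx - \vx_m$ and $\vy - \vy_m$. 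A secondary technical point is the interchange of the $\Gamma$-integral and the $\mu$-integral together with the residue evaluation — this is routine but requires checking analyticity of $\vx_m,\vy_m$ inside $\Gamma$, which holds because the poles $\xi_j$ and the eigenvalues of $G_m$, $H_m^\ast + V_m^\ast\vb\vc^\ast V_m$ lie in $\mathbb E$ (the latter because $W$ of these compressions is contained in the corresponding numerical range), so $\Gamma$ can be taken to exclude all of them while still enclosing $\mathbb E$. Once the FOM bound is in hand, the remainder is bookkeeping.
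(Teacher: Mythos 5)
Your overall architecture coincides with the paper's proof: Lemma~\ref{lem:contour}, insertion of the Markov representation, Fubini and the residue theorem to reduce $\tildeE(f)$ to an integral over $[\alpha,\beta]$ of $\vx(t)\vy(t)^\ast - \vx_m(t)\vy_m(t)^\ast$, a splitting of that difference, the resolvent bound $\|(tI-A)^{-1}\|\le 1/\dist(t,W(A))\le 1/(\omega-t)$ together with its analogue for the compressed matrices, and finally $\int_\alpha^\beta \d\mu(t)/(\omega-t)^2 = |f'(\omega)|$. The splitting you choose, $(\vx-\vx_m)\vy^\ast + \vx_m(\vy-\vy_m)^\ast$, is a harmless variant of the paper's $\vx(\vy-\vy_m)^\ast + (\vx-\vx_m)\vy_m^\ast$; either pairs each FOM error with a factor bounded by $\|\vb\|/(\omega-t)$ or $\|\vc\|/(\omega-t)$. (A minor slip: the analyticity of $\vx_m(z)$ outside $\bbE$ has nothing to do with the location of the poles $\xi_j$, which one would normally place \emph{outside} $\bbE$; it follows solely from $\spec(G_m)\subseteq W(G_m)\subseteq W(A)\subseteq\bbE$.)

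The genuine gap is the step you yourself flag as the main obstacle: the bound $\|\vx(t)-\vx_m(t)\| \le \frac{4\|\vb\|}{\omega-t}\cdot\frac{\eta_m}{1-\eta_m}$ is only asserted, in the form of ``a result of the type proved in~\cite{BeckermannReichel2009}'', and the estimates there are formulated in the Hermitian/spectral setting; they do not directly yield a numerical-range version with an explicit constant. The paper closes this with a concrete mechanism you would need to reproduce: Theorem~3.4 of~\cite{Beckermann2011} provides a rational function $R\in\Pi_m/q_m$ with $\|R(\widetilde A)\|\le 2$ for every matrix satisfying $W(\widetilde A)\subset\bbE$, with $|R|\le 2$ on $\bbE$, and with $|R(t)|\ge |B_m(\phi(t))|-1\ge \eta_m^{-1}-1$ for $t\notin\bbE$. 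One then forms $r_t(z) = \frac{1}{z-t}\bigl(1-\frac{R(z)}{R(t)}\bigr)\in\Pi_{m-1}/q_m$ and applies the exactness property of Lemma~\ref{lemma:exactrational} to obtain
\[
\vx(t)-\vx_m(t) = (tI-A)^{-1}\frac{R(A)}{R(t)}\,\vb \;-\; U_m(tI-G_m)^{-1}\frac{R(G_m)}{R(t)}\,U_m^\ast\vb,
\]
each term being bounded by $\frac{2\|\vb\|}{\omega-t}\cdot\frac{\eta_m}{1-\eta_m}$; this is precisely where the factor $4$, and hence the final constant $8$, comes from. Without this (or an equivalent) construction the $\eta_m/(1-\eta_m)$ form and the constant remain unproved; with it, the rest of your argument is exactly the bookkeeping you describe.
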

\begin{proof}
In the same way as in the proof of~\cite[Theorem 5.7]{BeckermannKressnerSchweitzer2017}, we obtain from Lemma~\ref{lem:contour} and the Fubini theorem the bound
% Since all expressions
% $\vx(z),\vy(z)^*,\vx_m(z),\vy_m(z)^*$ are analytic outside $\bbE$ and decay like $1/z$ at $\infty$, we get from Lemma~\ref{lem:contour} by exchanging integration and using the Cauchy residual theorem
% \begin{eqnarray*}
%      \tildeE(f)&=&\frac{1}{2\pi i} \int_{\Gamma} \int_{\alpha}^\beta \frac{\d\mu(t)}{z-t} \big( \vx(z)\vy(z)^*- \vx_m(z)\vy_m(z)^\ast \big) \d z
%      \\&=& -\int_{\alpha}^\beta \big( \vx(t)\vy(t)^*-\vx_m(t)\vy_m(t)^\ast \big) \d\mu(t)
%      \\&=& -\int_{\alpha}^\beta \big( \vx(t)(\vy(t)-\vy_m(t))^\ast + (\vx(t) - \vx_m(t))\vy_m(t)^\ast \big) \d\mu(t) .
% \end{eqnarray*}
% By taking norms, we obtain
\begin{equation}
\|\tildeE(f)\| \leq \int_\alpha^\beta ( \|\vx(t)\|\|\vy(t)-\vy_m(t)\| + \|\vy_m(t)\| \|\vx(t) - \vx_m(t))\|) \dmu \label{eq:Em_norm}.
\end{equation}
We have \begin{equation} \label{eq:boundshiftedinverse}
         \|(tI-A)^{-1}\| \leq \frac{1}{\mbox{dist}(t,W(A))} \le  \frac{1}{\mbox{dist}(t,\bbE)} \le \frac{1}{\omega-t},
        \end{equation}
  where the last inequality follows for all $t\in [\alpha,\beta]$ from condition~\eqref{eq_assumtion_markov}.
 Analogously,
\[
 \|(t I - H_m - V_m^\ast \vc  \vb^\ast V_m)^{-1}\| 
 %\frac{1}{\mbox{dist}(t,W(H_m + V_m^\ast \vc  \vb^\ast V_m))}
 \leq \frac{1}{\mbox{dist}(t,W(A+\vb\vc^\ast))} \le \frac{1}{\omega-t}.
\]
In particular, these bounds imply
$$
    \frac{\|\vx(t)\|}{\| \vb \|} \leq \frac{1}{\mbox{dist}(t,\bbE)}
    = \frac{1}{\omega-t} , \quad
    \frac{\|\vy_m(t)\|}{\| \vc \|} \leq \frac{1}{\omega-t}
$$
for all $t\in [\alpha,\beta]$.
We claim that, for $t\in [\alpha,\beta]$,
\begin{equation} \label{eq:claim}
    \frac{\|\vx(t) - \vx_m(t) \|}{\| \vb \|} \leq
    \frac{4}{\omega-t}\cdot \frac{\eta_m}{1-\eta_m} , \quad
    \frac{\|\vy(t) - \vy_m(t) \|}{\| \vc \|} \leq
    \frac{4}{\omega-t}\cdot \frac{\eta_m}{1-\eta_m} .
\end{equation}
Inserting these bounds into \eqref{eq:Em_norm} leads to
$$
     \| \tildeE(f) \| \leq 8 \| \vb\| \| \vc\| \frac{\eta_m}{1-\eta_m}\,
     \int \frac{d\mu(t)}{(\omega-t)^2},
$$
with the integral being equal to $|f'(\omega)| = \| f' \|_{L^\infty(\mathbb E)}$. Hence, we arrive at the assertion of the theorem.

It remains to show the first inequality of \eqref{eq:claim}, the proof of the second is entirely analogous. Theorem~3.4 in~\cite{Beckermann2011} establishes the existence of a rational function $R \in \Pi_m/q_m$ depending only on $q_m$ and $\bbE$ such that
\begin{equation} \label{eq:propertiesofR}
\begin{array}{l}
    \| R(\widetilde A) \| \leq  2 \mbox{~for all square matrices $\widetilde A$ with $W(\widetilde A)\subset \bbE$},
   \\
   | R(z) |\leq 2 \mbox{~for all $z\in \bbE$},
   \\ | R(t) |\geq |B_m(\phi(t))| - 1 \mbox{~for all $t\not\in \bbE$}.
  \end{array}
\end{equation}
Let $t\in [\alpha,\beta]$ be fixed, and consider the rational function 
$$
      z \mapsto r_t(z) = \frac{1}{z-t} - \frac{1}{z-t} \cdot \frac{R(z)}{R(t)} .
$$
Since $r_t\in \Pi_{m-1}/q_m$, the exactness property of Lemma~\ref{lemma:exactrational} allows to conclude that
$r_t(A)\vb = U_m r_t(G_m) U_m^\ast \vb$, and thus
\begin{equation} \label{xintermsofR}
     \vx(t) - \vx_m(t)
     = (tI-A)^{-1} \frac{R(A)}{R(t)} \vb - U_m(tI-G_m)^{-1} \frac{R(G_m)}{R(t)} U_m^\ast \vb.
\end{equation}
Using the properties of $R$ from~\eqref{eq:propertiesofR} and the bound~\eqref{eq:boundshiftedinverse}, we have
$$
     \left\| (tI-A)^{-1} \frac{R(A)}{R(t)} \vb \right\| \leq \frac{\| \vb\|}{\omega-t} \frac{\| R(A) \|}{|R(t)|} \leq
     \frac{2 \| \vb\|}{\omega-t} \frac{\eta_m}{1-\eta_m}
$$
and the same upper bound if one replaces $\vb$ and $A$ by $U_m^\ast \vb$ and $G_m$, respectively. Inserting these bounds into~\eqref{xintermsofR} shows the claim \eqref{eq:claim} and completes the proof.
\end{proof}

% \begin{remark}\label{rem:subdiagonal}
% \textcolor{red}{??? I think we should clarify the purpose of this remark. Is the point that we rediscover the result of Theorem 4.2? Up to a constant; how much can the constants differ for large $m$? Is this proof technique similar or very different from [8]? In which sense does this improve the result of Theorem~\ref{the:estimate_rational_Markov}?}
%    With the notation of Theorem~\ref{the:estimate_rational_Markov}, one may also show that
%    $$
%         \min_{r \in \Pi_{m-1}/q_{m}} \| f - r \|_{L^\infty(\bbE)}
%         \leq 2 \, \| f \|_{L^\infty(\mathbb E)} \frac{\eta_m}{1-\eta_m}.
%    $$
%    With the notations of the previous proof, it is sufficient to examine the candidate $r(z)=\int r_t(z) d\mu(t)\in \Pi_{m-1}/q_m$, since, for $z\in \bbE$, 
%    $$
%         |f(z)-r(z)| = \left| \int_\alpha^\beta \frac{R(z)}{R(t)}d\mu(t) \right|
%         \leq \frac{2\eta_m}{1-\eta_m} \int \frac{\d\mu(t)}{\gamma-t}.
%    $$
%    ~\hfill$\diamond$
% \end{remark}

\begin{remark}\label{rmk:polynomial_convergence} \em
For polynomial Krylov subspaces, $\xi_1= \cdots =\xi_m=\infty$. In turn, $B_m(\phi(x))=\phi(x)^m$ and $\eta_m=1/|\phi(\beta)|^m$. Thus, up to the factor $1/(1-\eta_m)$, our Theorem~\ref{the:unsymmetric_Markov} reduces to  \cite[Theorem~5.7]{BeckermannKressnerSchweitzer2017}. We mention in passing that this factor can be removed, using the techniques of \cite[Lemma~5.1]{BeckermannKressnerSchweitzer2017}, if at least two of the poles $\xi_1,...,\xi_m$ are infinite.
We should also mention that, once a suitable set $\bbE$ with more explicit conformal map $\phi$ (as for instance an ellipse or a teardrop set) is found, we may use some of the estimates for $\eta_m$ in terms of $\phi,\alpha,\beta$ as stated in \S \ref{sec_markov}.
\end{remark}

% SHALL WE ADD OTHER EXAMPLES? 
% \textcolor{red}{??? YES, it would be good to give at least one practically relevant example that does not involve the real line / a real interval. Otherwise we have no specific result in this section going significantly beyond the Hermitian case. What about inverse square root for a wedge set in the left-half plane???}

%\input{experiments.tex}

\section{The matrix sign function}\label{sec:sign}

When the numerical range of $A$ or $A+\vb\vc^*$ contains a singularity of $f$, none of the convergence results from Section~\ref{sec:convergence} applies. For the matrix sign function, a notorious example for this situation, we discuss a potential remedy.

Letting
\[
\sign: \C\setminus {\mathrm{i} \mathbb R} \to \C,\qquad \sign(z) = \begin{cases}
-1 & \Re(z) < 0, \\
1 & \Re(z) > 0,
\end{cases}
\]
where $\Re(z)$ denotes the real part of $z$, the matrix sign function $\sign(A)$ is defined whenever $A$ has no purely imaginary eigenvalue. This function plays an important role in, e.g., linear-quadratic optimal control~\cite{Roberts1980}, quantum chromodynamics~\cite{BlochEtAl2007,VanDenEshofFrommerLippertSchillingVanDerVorst2002}, and eigenvalue solvers~\cite{BeylkinCoultMohlenkamp1999,NakaHigh2013}.

\subsection{Low-rank updates}

Except for trivial situations ($\sign(A) = \pm I$), the sign function is usually \emph{not} defined on the numerical range of $W(A)$, which poses a severe problem for Krylov subspace techniques, not only in theory but also in practice. In the context of approximating $\sign(A)\vbv$, Krylov subspace methods have been observed to exhibit slow, irregular or erratic convergence~\cite{VanDenEshofFrommerLippertSchillingVanDerVorst2002}.
%We have made a similar observation in the context of our algorithm; see Example~\ref{ex:sign_algorithm} below.
As a remedy, it has been proposed to exploit the relation
\begin{equation}\label{eq:sign_function_inverse_square_root}
\sign(A) = (A^2)^{-1/2}A
\end{equation}
and approximate $\sign(A)\vbv$ in the Krylov space $\spK_m(A^2,A\vbv)$; see, e.g.,~\cite{Borici1999,VanDenEshofFrommerLippertSchillingVanDerVorst2002}. For an invertible Hermitian matrix $A$ the advantage of~\eqref{eq:sign_function_inverse_square_root} is obviously that the numerical range of $A^2$ does not contain a singularity of the inverse square root.% Each iteration of a Krylov method for $\spK_m(A^2,A\vb)$ requires two matrix-vector products per iteration, but it often converges in roughly half the number of iterations as a Krylov method for $\spK_m(A,\vb)$, so that both approaches have roughly the same computational cost, see, e.g.,~\cite[Section 4.4]{VanDenEshofFrommerLippertSchillingVanDerVorst2002} and also Example~\ref{ex:sign_algorithm} below.

In the following, we will discuss an approach based on~\eqref{eq:sign_function_inverse_square_root} for approximating low-rank updates~\eqref{eq:low_rank_update} of the matrix sign function. Because~\eqref{eq:sign_function_inverse_square_root} offers a clear advantage only for the Hermitian case, we now assume that $A = A^\ast$ and $D = \vb J \vb^\ast$ with $J = J^\ast$. Let us, however, mention that the construction readily extends to the non-Hermitian case.

Using~\eqref{eq:sign_function_inverse_square_root}, it follows that
\begin{align}
	&\sign(A+D) - \sign(A)  =  (A+D)((A+D)^2)^{-1/2} - A(A^2)^{-1/2} \nonumber \\
=& (A+D)\big((A^2+\widetilde D)^{-1/2} - (A^2)^{-1/2}\big) + \vb J\vb^\ast(A^2)^{-1/2} \label{eq:signformulation}
\end{align}
with $\widetilde D := A\vb J\vb^\ast+\vb J\vb^\ast(A+\vb J\vb^\ast)$.
A rank-$\ell$ update of the sign function is thus performed by computing a rank-$2\ell$ update of $(A^2)^{-1/2}$ and the action of $(A^2)^{-1/2}$ on $\vb$. Because the range and co-range of $\widetilde D$ are contained in the span of $[\vb,A\vb]$, it is natural to choose the rational Krylov subspace 
\begin{equation}\label{eq:block_krylov_space_sign}
\calU_m := q_m(A^2)^{-1}\spK_m(A^2,[\vb,A\vb])
\end{equation}
with suitably chosen poles $\xi_1, \ldots,\xi_m$ for approximating the rank-$2\ell$ update. To approximate the second term in~\eqref{eq:signformulation}, we utilize the usual 
block Krylov approximation
$$(A^2)^{-1/2}\vb \approx U_m G_m^{-1/2}U_m^\ast\vb$$
for an orthonormal basis $U_m$ of  $\calU_m$.
Algorithm~\ref{alg:sign} summarizes the described approach for approximating~\eqref{eq:signformulation}.

\begin{algorithm}
\caption{Rational block Krylov subspace approximation of sign matrix function update~\eqref{eq:signformulation} for Hermitian $A,D$}
\label{alg:sign}
\begin{algorithmic}[1]
\State Choose poles $\xi_1,\ldots,\xi_m \in \C \cup \{\infty\}$ closed under complex conjugation.
\State Perform $m$ steps of Algorithm~\ref{alg:rational_arnoldi} to compute an orthonormal basis $U_m$ of $\calU_m = q_m(A^2)^{-1}\spK_m(A^2,[\vb,A\vb])$
and set $G_m = U_m^\ast A^2 U_m$.

\State Compute $F_m \!=\! \setlength{\arraycolsep}{1pt}\left(\begin{bmatrix}
G_m & U_m^\ast\big(A\vb J\vb^\ast + \vb J\vb(A+\vb J\vb^\ast)\big)U_m \\ 0 & G_m + U_m^\ast\big(A\vb J\vb^\ast + \vb J\vb(A+\vb J\vb^\ast)\big)U_m\end{bmatrix}\right)^{-1/2}.$
\State Set $X_m(z^{-1/2}) = F_m(1\!:\!2m,2m+1\!:\!4m)$.
\State Compute $\vf_m = U_m G_m^{-1/2}U_m^\ast\vb$.
\State Return $(A+\vb J\vb^\ast)(U_m X_m(z^{-1/2}) U_m^\ast) + \vb J\vf_m^\ast$.
\end{algorithmic}
\end{algorithm}

\begin{remark}\label{rmk:krylov_space_sign} \em
The rational Krylov space~\eqref{eq:block_krylov_space_sign} used in Algorithm~\ref{alg:sign} has a very specific structure, and its polynomial part is actually identical to an ordinary block Krylov space of order $2m$ for $A$. Precisely
\begin{equation}\label{eq:krylov_space_sign_rewritten}
	\calU_m = q_m(A^2)^{-1}\spK_m(A^2,[\vb,A\vb]) = q_{m}(A^2)^{-1}\spK_{2m}(A,\vb).
\end{equation}
This is different from the situation arising when approximating $\sign(A)\vbv$, where the polynomial part of the subspace corresponds only to odd powers of $A$. When $\vb$ is a vector, this observation could in principle be used to implement Algorithm~\ref{alg:sign} such that it avoids block arithmetic.
\end{remark}

The convergence of Algorithm~\ref{alg:sign} can be analyzed by combining the results from Section~\ref{sec:convergence} with known convergence results for Krylov subspace methods.

\begin{theorem}\label{the:sign_convergence}
Let $A$ and $D = \vb J \vb^\ast$ be Hermitian such that $A$ and $A+D$ are invertible. Then the error of the approximation returned by Algorithm~\ref{alg:sign} satisfies
\begin{align}
 &\|\sign(A+D) - \sign(A) - (A+D)(U_m X_m(z^{-1/2}) U_m^\ast) + \vb J\vf_m^\ast\|  \label{eq:tobound} \\
\leq& (4 \|A+D\| +2 \|\vb J\|\,\|\vb\|)  \min_{r \in \Pi_{m}/q_{m}} \|f - r\|_\bbE, \nonumber 
\end{align}
where $\bbE = \big[ \min\{\lmin(A^2),\lmin((A+D)^2)\},  \max\{\lmax(A^2),\lmax((A+D)^2)\} \big]$ and $f(z) = z^{-1/2}$.
\end{theorem}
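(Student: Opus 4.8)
The plan is to split the error in~\eqref{eq:tobound} into two pieces according to the decomposition~\eqref{eq:signformulation} and bound each separately using the convergence theory already established. Writing $g(z) = z^{-1/2}$ and $\widetilde D = A\vb J\vb^\ast + \vb J\vb^\ast(A+\vb J\vb^\ast)$, the quantity inside the norm on the left-hand side of~\eqref{eq:tobound} equals
\[
(A+D)\Bigl( g(A^2+\widetilde D) - g(A^2) - U_m X_m(g) U_m^\ast \Bigr) + \vb J\bigl( \vb^\ast g(A^2) - \vf_m^\ast \bigr),
\]
so by the triangle inequality and submultiplicativity it is bounded by
\[
\|A+D\|\cdot \|\widetilde E(g)\| + \|\vb J\|\cdot \|g(A^2)\vb - U_m G_m^{-1/2}U_m^\ast\vb\|,
\]
where $\widetilde E(g)$ is the low-rank-update error of Algorithm~\ref{alg:rationalkrylovupdate} applied to $g$, $A^2$, and the rank-$2\ell$ Hermitian update $\widetilde D$.

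The first term is handled by Theorem~\ref{the:convergence_rational}: one must check its hypotheses in the present setting. The matrix $A^2$ is Hermitian, $\widetilde D$ is Hermitian (using $J = J^\ast$ and $A = A^\ast$), the poles are closed under complex conjugation by assumption, and by Remark~\ref{rmk:krylov_space_sign} the space $\calU_m = q_m(A^2)^{-1}\spK_m(A^2,[\vb,A\vb])$ is a rational Krylov space for $A^2$ with starting block $[\vb,A\vb]$ whose column span contains the ranges of $\widetilde D$ and $\widetilde D^\ast$; this is exactly the configuration for which $U_m = V_m$ may be chosen, so Theorem~\ref{the:convergence_rational} applies with $\bbE$ an interval containing $W(A^2)$ and $W((A+D)^2) = W(A^2 + \widetilde D)$. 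Since $A$ and $A+D$ are invertible, $A^2$ and $(A+D)^2$ are positive definite, and the stated interval $\bbE$ contains both spectra, with $g(z) = z^{-1/2}$ analytic on it. This yields $\|\widetilde E(g)\| \le 4\min_{r\in\Pi_m/q_m}\|g-r\|_\bbE$.

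For the second term, the approximation $U_m G_m^{-1/2}U_m^\ast \vb$ to $g(A^2)\vb$ is the standard rational Krylov (rational FOM/Galerkin) approximation to $f(A^2)\vbv$ in the space $\calU_m$. Because $\vb$ lies in the polynomial part $\spK_{2m}(A,\vb) \supseteq \spK_1(A^2,\vb)$ of $\calU_m$ — and more precisely $\calU_m$ contains $\spK_1(A^2,\vq)$ with $\vq = q_m(A^2)^{-1}\vb$ — the exactness result of Lemma~\ref{lemma:exactrational} gives $r(A^2)\vb = U_m r(G_m) U_m^\ast \vb$ for every $r \in \Pi_{m-1}/q_m$ containing $\vb$ appropriately; combined with $\|(A^2 - tI)^{-1}\|\le \operatorname{dist}(t,W(A^2))^{-1}$ and the analogous bound for the compression, the standard argument (as in, e.g., the Hermitian convergence theory for $f(A)\vbv$) produces $\|g(A^2)\vb - U_m G_m^{-1/2}U_m^\ast\vb\| \le 2\|\vb\|\min_{r\in\Pi_m/q_m}\|g-r\|_\bbE$. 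Assembling the two bounds with the factors $\|A+D\|$ and $\|\vb J\|\,\|\vb\|$ gives the coefficient $4\|A+D\| + 2\|\vb J\|\,\|\vb\|$ as claimed.

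The main obstacle I anticipate is the careful bookkeeping needed to verify that the rank-$2\ell$ update $\widetilde D$ and the Krylov space $\calU_m = q_m(A^2)^{-1}\spK_{2m}(A,\vb)$ genuinely satisfy the $U_m = V_m$ reduction hypothesis of Theorem~\ref{the:convergence_rational} — in particular confirming that $\operatorname{range}(\widetilde D)$ and $\operatorname{range}(\widetilde D^\ast)$ both lie in $\operatorname{colspan}[\vb, A\vb]$ and that this is the correct starting block for the rational Krylov recurrence with respect to $A^2$ — and, secondarily, pinning down the precise degree/denominator bookkeeping in the second term so that the vector-approximation error is also controlled by $\min_{r\in\Pi_m/q_m}\|g-r\|_\bbE$ (rather than $\Pi_{m-1}/q_m$) with the constant $2$. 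Both are routine but require matching the index conventions used in Section~\ref{sec:rational} exactly.
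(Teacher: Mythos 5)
Your proposal is correct and follows essentially the same route as the paper: the same splitting via~\eqref{eq:signformulation} and the triangle inequality, Theorem~\ref{the:convergence_rational} for the rank-$2\ell$ update of $(A^2)^{-1/2}$ (using that $A^2+\widetilde D=(A+D)^2$), and the standard rational Krylov error bound $2\|\vb\|\min_{r\in\Pi_m/q_m}\|f-r\|_{\bbE}$ for the term $(A^2)^{-1/2}\vb-\vf_m$. The hypothesis checks you flag as potential obstacles (Hermiticity of $\widetilde D$, the ranges of $\widetilde D$ lying in $\colspan[\vb,A\vb]$, and the transfer of the vector bound to the block/non-standard rational Krylov setting) are exactly the points the paper handles by appeal to Lemma~\ref{lemma:exactrational} and the analogous arguments in the cited literature.
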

\begin{proof}
Using~\eqref{eq:signformulation} and setting $M = (A^2+\widetilde D)^{-1/2} - (A^2)^{-1/2}$, it follows that~\eqref{eq:tobound} is bounded by
\begin{align*}
& \| (A+D)\big(M - U_m X_m(f) U_m^\ast\big) + \vb J \big( \vb^\ast(A^2)^{-1/2} - \vf_m^\ast \big) \| \\
\le & \|A+D\|\,\|M - U_m X_m(f) U_m^\ast\| + \|\vb J\|\, \| \vb^\ast(A^2)^{-1/2} - \vf_m^\ast \|
\end{align*}
Using Theorem~\ref{the:convergence_rational}, the first term is bounded via
\begin{equation}\label{eq:sign_proof_estimate1}
\|M - U_mX_m(f)U_m^\ast\| \leq 4  \min_{r \in \Pi_{m}/q_{m}} \|f - r\|_\bbE.
\end{equation}
For the second term, we can estimate
\begin{equation}\label{eq:sign_proof_estimate2}
\|(A^2)^{-1/2}\vb - \vf_m^\ast\| \leq 2 \|\vb\| \min_{r \in \Pi_{m}/q_{m}} \|f - r\|_{\widetilde{\bbE}} \leq 2 \|\vb\| \min_{r \in \Pi_{m}/q_{m}} \|f - r\|_{\bbE}
\end{equation}
	with $\widetilde{\bbE} = [\lmin(A^2), \lmax(A^2)] \subseteq \bbE$. For the case that $\vb$ is a vector,~\eqref{eq:sign_proof_estimate2} is shown in \cite[Theorem 4.10]{Guettel2010}, see also the proof of~\cite[Theorem~5.2]{BeckermannReichel2009}, and the proof of this result carries over to the block case (and the non-standard rational Krylov space that we are using) completely analogously, using the exactness property from Lemma~\ref{lemma:exactrational} as a basis. Further note that the estimate~\eqref{eq:sign_proof_estimate2} is actually valid for the smaller subspace $q_m(A^2)^{-1}\spK_m(A^2,\vb) \subseteq q_m(A^2)^{-1}\spK_m(A^2,[\vb,A\vb])$.
%, but it is difficult to quantify the speed-up of convergence due to the augmentation with $A\vb$.
Combining~\eqref{eq:sign_proof_estimate1} and~\eqref{eq:sign_proof_estimate2} gives the desired result.
\end{proof}

As $f(z) = z^{-1/2}$ is a Markov function, we can, e.g., apply Theorem~\ref{the:estimate_rational_Markov} to obtain bounds for $\|f - r\|_\bbE$ in Theorem~\ref{the:sign_convergence}. 

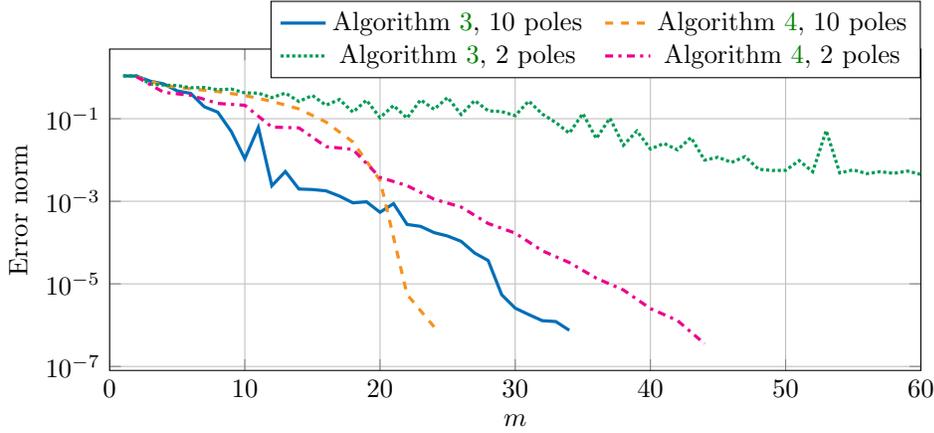
\begin{figure}
\begin{center}
\pgfplotsset{height=0.45\linewidth,width=0.95\linewidth,compat=1.10,every axis/.append style={legend style={/tikz/every even column/.append style={column sep=6pt}}}}

\noindent%
\begin{tikzpicture}[scale=1]%
    \begin{semilogyaxis}[legend style={at={(1,1.15)}}, 
   	anchor= north east, legend columns=2, xmin=0, xmax=60,grid=major, 
   	xlabel={$m$}, ylabel={Error norm}]

\addplot[color=NavyBlue,very thick]
	    table [x ={x},y ={err1}] {figs/experiment3.dat};\addlegendentry{Algorithm~\ref{alg:rationalkrylovupdate}, $10$ poles}
 
  \addplot[color=BurntOrange,very thick,dashed]
	    table [x ={x},y ={err2}]{figs/experiment3.dat} node [pos=0,left] {}; \addlegendentry{Algorithm~\ref{alg:sign}, $10$ poles}

\addplot[color=ForestGreen,very thick,densely dotted]
	    table [x ={x},y ={err3}] {figs/experiment3.dat};\addlegendentry{Algorithm~\ref{alg:rationalkrylovupdate}, $2$ poles}
 
  \addplot[color=Magenta,very thick,dash dot]
	    table [x ={x},y ={err4}]{figs/experiment3.dat} node [pos=0,left] {}; \addlegendentry{Algorithm~\ref{alg:sign}, $2$ poles}
    \end{semilogyaxis}
\end{tikzpicture}
\end{center}
\caption{Convergence curves of Algorithm~\ref{alg:rationalkrylovupdate} and Algorithm~\ref{alg:sign} using the poles of a Zolotarev approximation of degree 2 or 10 for approximating $\sign(A+\vbv\vbv^\ast)-\sign(A)$, where $\spec(A) \subseteq [-1,-10^{-2}] \cup [10^{-2},1],\ \|\vbv\| = 1$.}
\label{fig:diagonal_sign}
\end{figure}

\begin{example}\label{ex:sign_algorithm}
Consider the diagonal, indefinite matrix $A \in \C^{200 \times 200}$ with $100$ linearly spaced eigenvalues in each of the intervals $[-1,-10^{-2}]$ and $[10^{-2},1]$. Let $\vbv \in \C^{200}$ be a random vector of unit norm. We compare Algorithm~\ref{alg:sign} to the straight-forward application of Algorithm~\ref{alg:rationalkrylovupdate} to perform the update $\sign(A+\vbv\vbv^\ast)-\sign(A)$. We use the poles of the Zolotarev approximation of degree 2 and 10 for the inverse square root in Algorithm~\ref{alg:sign} and the poles of the corresponding Zolotarev approximation of the sign function in Algorithm~\ref{alg:rationalkrylovupdate}; see~\cite{PetrushevPopov1988,Zolotarev1877}. Again, the poles are in Leja ordering and cyclically repeated. The resulting convergence curves are depicted in Figure~\ref{fig:diagonal_sign}.
%, where we aim for an accuracy of $10^{-6}$.
As expected, the convergence curve of Algorithm~\ref{alg:sign} is much smoother than that of Algorithm~\ref{alg:rationalkrylovupdate}. In addition, the subspace dimension required to reach the target accuracy $10^{-6}$ by Algorithm~\ref{alg:sign} is smaller: When using 10 different poles, it needs 24 vs 34 iterations, i.e., a reduction of about 30\%. For only $2$ different poles, the difference becomes a lot more pronounced, and Algorithm~4 requires 44 iterations, while Algorithm~3 fails to converge in a reasonable number of iterations.

Concerning the computation cost of the algorithms, several things have to be taken into account: On the one hand, the number of nonzeros in $A^2$ is typically larger than in $A$, which leads to higher expenses when factoring $A^2 + \xi_i I$. On the other hand, the poles of the Zolotarev approximation for the sign function are complex, so that Algorithm~\ref{alg:rationalkrylovupdate} requires complex arithmetic even though $A$ and $\vbv$ are real (note however, that only half the number of Cholesky factorizations needs to be computed, as the Zolotarev shifts come in complex conjugate pairs).
% DK: I have removed this discussion because I am not sure we want to advocate Algorithm 3.
%so that the operations performed in Algorithm~\ref{alg:sign} require more than twice the computation time of those in Algorithm~\ref{alg:rationalkrylovupdate}.
%Also note that in later iterations, the error norm in Algorithm~\ref{alg:rationalkrylovupdate} stagnates in every other iteration and it is therefore not advisable to use the error estimate~\eqref{eq:error_estimate_difference} with $d = 1$~\ref{alg:rationalkrylovupdate}. The stagnations are related to the occurrence of Ritz values very close to zero. Such a Ritz value near zero can only occur in two consecutive iterations if it is very close to an eigenvalue of $A$, so that stagnation for more than one iteration only occurs if $A$ is almost singular; see~\cite[Section 4.3]{VanDenEshofFrommerLippertSchillingVanDerVorst2002}. Consequently all values $d > 1$ can reasonably be used in the error estimator for the sign function.
\hfill$\diamond$
\end{example}

\subsection{Connection to Krylov subspace methods for linear matrix equations}

We conclude this work by pointing out a curious connection to Krylov subspace methods for 
the matrix Sylvester equation
\begin{equation} \label{eq:sylv}
 A_1 Z - Z A_2 + \vb_1 \vc_2^* = 0,
\end{equation}
with coefficients $A_1 \in \C^{n_1 \times n_1}$, $A_2 \in \C^{n_2 \times n_2}$ and $\vb_1 \in \C^{n_1 \times \ell}$, $\vc_2 \in \C^{n_2 \times \ell}$ such that $\ell \ll \min\{n_1,n_2\}$. We refer to~\cite{Simoncini2016} for an overview of applications and numerical algorithms for this and similar equations.

We assume that $W(A_1), W(-A_2)$ are contained in the open right-half plane, which implies that~\eqref{eq:sylv} has a unique solution $Z$. Moreover, it is well known that
\[
  \sign \left(\begin{bmatrix} A_1 & \vb_1 \vc_2^* \\ 0 & A_2 \end{bmatrix}\right) = \begin{bmatrix}
I_{n_1} & 2Z \\
0 & -I_{n_2}
\end{bmatrix}.
\]
In turn,
\begin{equation} \label{eq:signsylv}
\sign \left(\begin{bmatrix} A_1 & \vb_1 \vc_2^* \\ 0 & A_2 \end{bmatrix}\right) - \sign \left(\begin{bmatrix} A_1 & 0 \\ 0 & A_2 \end{bmatrix}\right)  = \begin{bmatrix}
0 & 2Z \\
0 & 0
\end{bmatrix}, 
\end{equation}
showing that the solution $Z$ of~\eqref{eq:sylv} can be obtained from a rank-$\ell$ update of the matrix sign function. After setting
\[
 A = \begin{bmatrix} A_1 & 0 \\ 0 & A_2 \end{bmatrix}, \quad \vb = \begin{bmatrix} \vb_1 \\ 0 \end{bmatrix}, \quad \vc = \begin{bmatrix} 0 \\ \vc_2 \end{bmatrix},
\]
the left-hand side of~\eqref{eq:signsylv} takes the familiar form $\sign(A+\vb\vc^*)-\sign(A)$.

As we will see below, the particular structure of the update implies that the squaring trick from the previous subsection is not needed for~\eqref{eq:signsylv}. Applying Algorithm~\ref{alg:rationalkrylovupdate} directly to~\eqref{eq:signsylv} involves the rational Krylov subspaces
\begin{eqnarray*}
 q_m(A)^{-1} \spK_m(A,\vb)  %&=&  q_m\left( \begin{bmatrix} A_1 & 0 \\ 0 & A_2 \end{bmatrix}\right)^{-1} \mathcal K_m\left( \begin{bmatrix} A_1 & 0 \\ 0 & A_2 \end{bmatrix}, \begin{bmatrix} \vb_1 \\ 0  \end{bmatrix}, \right)  \\
 &=& \left\{ \begin{bmatrix} u \\ 0 \end{bmatrix}: u\in q_m(A_1)^{-1} \spK_m(A_1,\vb_1) \right\}, \\
	 \bar{q}_m(A)^{-*} \spK_m(A^*,\vc) %&=& q_m\left( \begin{bmatrix} A_1 & 0 \\ 0 & A_2 \end{bmatrix}\right)^{-*} \mathcal K_m\left( \begin{bmatrix} A^*_1 & 0 \\ 0 & A^*_2 \end{bmatrix}, \begin{bmatrix} 0 \\ \vc_2  \end{bmatrix} \right) \\
 &=& \left\{ \begin{bmatrix} 0 \\ v \end{bmatrix}: v\in q_m(A_2)^{-*} \spK_m(A_2^*,\vc_2) \right\}.
\end{eqnarray*}
Thus, we obtain orthonormal bases $U_m = \begin{bmatrix} U_{1,m} \\ 0 \end{bmatrix}$, $V_m = \begin{bmatrix} 0 \\ V_{2,m} \end{bmatrix}$ by letting $U_{1,m}$ and $V_{2,m}$ contain orthonormal bases of $q_m(A_1)^{-1} \spK_m(A_1,\vb_1)$ and $q_m(A_2)^{-*}\spK_m(A_2^*,\vc_2)$, respectively. The compressions of $A$ and $A^*$ to these bases take the form
\[
G_m := U_m^* A U_m = U_{1,m}^* A_1 U_{1,m}, \quad 
                    H_m := V_m^* A^* V_m = V_{2,m}^* A^*_2 V_{2,m}.
\]
We recall that the matrix $X_m(\sign)$ in Algorithm~\ref{alg:rationalkrylovupdate} is extracted from the (1,2) block of the matrix~\eqref{eq:blockmatrix_krylov}. In the described setting, this matrix takes the form
\[
 \sign\left(
\begin{bmatrix}
\tildeG & U_{1,m}^* \vb_1 \vc_2^* V_{2,m} \\ 0 & H_m^\ast 
\end{bmatrix} \right) = \begin{bmatrix}
I & 2\tilde Z_m \\ 0 & -I
\end{bmatrix},
\]
where $\tilde Z_m$ satisfies the Sylvester equation $G_m \tilde Z_m - \tilde Z_m H_m^* + U_{1,m}^* \vb_1 \vc_2^* U_{2,m} = 0$, which has a unique solution because  of
$W(G_m) \subset W(A_1)$, $W(H^*_{m}) \subset W(A^*_2)$.

In summary, Algorithm~\ref{alg:rationalkrylovupdate} applied to~\eqref{eq:signsylv} reduces to the following procedure:
\begin{enumerate}
 \item Apply Algorithm~\ref{alg:rational_arnoldi} to compute orth. basis $U_{1,m}$ of $q_m(A_1)^{-1} \spK_m(A_1,\vb_1)$ and $G_{m} = U_{1,m}^* A_1 U_{1,m}$.
 \item Apply Algorithm~\ref{alg:rational_arnoldi}  to compute orth. basis $V_{2,m}$ of $\bar{q}_m(A_2)^{-*} \spK_m(A_2,\vc_2)$ and $H_{m} = V_{2,m}^* A_2^* V_{2,m}$.
 \item Solve Sylvester equation $G_m \tilde Z_m - \tilde Z_m H_m^* + U_{1,m}^* \vb_1 \vc_2^* V_{2,m} = 0$.
 \item Return approximate solution $Z_m = U_{1,m}\tilde Z_m V_{2,m}^*$
\end{enumerate}
This procedure turns out to be identical to existing rational Krylov subspace methods for Sylvester equations; see~\cite{Benner2009,Druskin2011} as well as~\cite{Simoncini2016} for additional references. In turn, the theory developed in this work can be used to bound the convergence of these methods via the best rational approximation of the sign function on $W(A_1)\cup W(-A_2)$. However, the bounds resulting from such an approach do not seem to offer advantages compared to existing bounds~\cite{Beckermann2011,Beckermann2019,Druskin2011} and we will therefore skip the details.

\section{Conclusions}\label{sec:conclusions}

The rational Krylov methods developed in this work constitute a fast way to approximate low-rank updates of the form $f(A+\vb\vc^*) - f(A)$, provided that shifted inverses with $A$ can be applied efficiently. Their computational cost is comparable to the application of existing rational Krylov methods for approximating $f(A)\vb$ and $f(A^*)\vc$.
This work has focussed on theoretical and algorithmic foundations. Future work will explore the application and the adaptation of our methods to specific problems in scientific computing and data science.

\begin{paragraph}{Acknowledgments} The authors gratefully acknowledge inspiring discussions with Stefano Massei, Vanni Noferini, and Ana \v{S}u\v{s}njara.

\end{paragraph}

\bibliographystyle{siam}
\bibliography{matrixfunctions}

\newcommand{\noopsort}[1]{} \newcommand{\printfirst}[2]{#1}
  \newcommand{\singleletter}[1]{#1} \newcommand{\switchargs}[2]{#2#1}
\begin{thebibliography}{10}

\bibitem{Andersson1981}
{\sc J.-E. Andersson}, {\em Approximation of {$e^{-x}$}\ by rational functions
  with concentrated negative poles}, J. Approx. Theory, 32 (1981), pp.~85--95.

\bibitem{Aptekarev2002}
{\sc A.~I. Aptekarev}, {\em Sharp constants for rational approximations of
  analytic functions}, Mat. Sb., 193 (2002), pp.~3--72.

\bibitem{Beckermann2000}
{\sc B.~Beckermann}, {\em The condition number of real {V}andermonde, {K}rylov
  and positive definite {H}ankel matrices}, Numer.\ Math., 85 (2000),
  pp.~553--577.

\bibitem{Beckermann2011}
{\sc B.~Beckermann}, {\em An error analysis for rational {G}alerkin projection
  applied to the {S}ylvester equation}, SIAM J. Numer. Anal., 49 (2011),
  pp.~2430--2450.

\bibitem{BeckermannKressnerSchweitzer2017}
{\sc B.~Beckermann, D.~Kressner, and M.~Schweitzer}, {\em Low-rank updates of
  matrix functions}, SIAM J. Matrix Anal. Appl., 39 (2018), pp.~539--565.

\bibitem{BeckermannReichel2009}
{\sc B.~Beckermann and L.~Reichel}, {\em Error estimation and evaluation of
  matrix functions via the {F}aber transform}, SIAM J.\ Numer.\ Anal., 47
  (2009), pp.~3849--3883.

\bibitem{Beckermann2019}
{\sc B.~Beckermann and A.~Townsend}, {\em Bounds on the singular values of
  matrices with displacement structure}, SIAM Rev., 61 (2019), pp.~319--344.

\bibitem{Benner2009}
{\sc P.~Benner, R.-C. Li, and N.~Truhar}, {\em On the {ADI} method for
  {S}ylvester equations}, J. Comput. Appl. Math., 233 (2009), pp.~1035--1045.

\bibitem{Benzi2020}
{\sc M.~Benzi and P.~Boito}, {\em Matrix functions in network analysis}, GAMM
  Mitteilungen,  (2020).
\newblock To appear.

\bibitem{BergForst1975}
{\sc C.~Berg and G.~Forst}, {\em Potential Theory on Locally Compact Abelian
  Groups}, Springer, Berlin Heidelberg, 1975.

\bibitem{BerljafaElsworthGuettel2014}
{\sc M.~Berljafa, S.~Elsworth, and S.~G{\"u}ttel}, {\em A rational {K}rylov
  toolbox for {MATLAB}}, tech. rep., Manchester Institute for Mathematical
  Sciences, The University of Manchester, 2014.
\newblock MIMS EPrint 2014.56.

\bibitem{BernsteinVanLoan2000}
{\sc D.~S. Bernstein and C.~F. {Van Loan}}, {\em Rational matrix functions and
  rank-1 updates}, SIAM J.\ Matrix Anal.\ Appl., 22 (2000), pp.~145--154.

\bibitem{BeylkinCoultMohlenkamp1999}
{\sc G.~Beylkin, N.~Coult, and M.~J. Mohlenkamp}, {\em Fast spectral projection
  algorithms for density-matrix computations}, J. of Comput. Phys., 152 (1999),
  pp.~32--54.

\bibitem{BlochEtAl2007}
{\sc J.~Bloch, A.~Frommer, B.~Lang, and T.~Wettig}, {\em An iterative method to
  compute the sign function of a non-{H}ermitian matrix and its application to
  the overlap {D}irac operator at nonzero chemical potential}, Comput.\ Phys.\
  Commun., 177 (2007), pp.~933--943.

\bibitem{Borici1999}
{\sc A.~Bori{\c{c}}i}, {\em On the {N}euberger overlap operator}, Phys.\ Lett.\
  B, 453 (1999), pp.~46--53.

\bibitem{CrouzeixKressner2020}
{\sc M.~Crouzeix and D.~Kressner}, {\em A bivariate extension of the
  {C}rouzeix-{P}alencia result with an application to {F}r\'echet derivatives
  of matrix functions}, arXiv preprint arXiv:2007.09784,  (2020).

\bibitem{DruskinKnizhnerman1998}
{\sc V.~Druskin and L.~Knizhnerman}, {\em Extended {K}rylov subspaces:
  Approximation of the matrix square root and related functions}, SIAM J.
  Matrix Anal. Appl., 19 (1998), pp.~755--771.

\bibitem{Druskin2011}
{\sc V.~Druskin, L.~Knizhnerman, and V.~Simoncini}, {\em Analysis of the
  rational {K}rylov subspace and {ADI} methods for solving the {L}yapunov
  equation}, SIAM J. Numer. Anal., 49 (2011), pp.~1875--1898.

\bibitem{ElsworthGuettel2020}
{\sc S.~Elsworth and S.~G\"{u}ttel}, {\em The block rational {A}rnoldi method},
  SIAM J. Matrix Anal. Appl., 41 (2020), pp.~365--388.

\bibitem{VanDenEshofFrommerLippertSchillingVanDerVorst2002}
{\sc J.~{\noopsort{Eshof}van den Eshof}, A.~Frommer, {\relax Th}.~Lippert,
  K.~Schilling, and H.~A. van~der Vorst}, {\em Numerical methods for the {QCD}
  overlap operator. {I}. {S}ign-function and error bounds}, Comput.\ Phys.\
  Commun., 146 (2002), pp.~203--224.

\bibitem{EstradaHigham2010}
{\sc E.~Estrada and D.~J. Higham}, {\em Network properties revealed through
  matrix functions}, SIAM Rev., 52 (2010), pp.~696--714.

\bibitem{FrommerGuettelSchweitzer2014}
{\sc A.~Frommer, S.~G\"uttel, and M.~Schweitzer}, {\em Convergence of restarted
  {K}rylov subspace methods for {S}tieltjes functions of matrices}, SIAM J.\
  Matrix Anal.\ Appl., 35 (2014), pp.~1602--1624.

\bibitem{Gil2010}
{\sc M.~I. Gil'}, {\em Perturbations of functions of diagonalizable matrices},
  Electron. J. Linear Algebra, 20 (2010), pp.~303--313.

\bibitem{GolubUnderwood1977}
{\sc G.~H. Golub and R.~Underwood}, {\em The block {L}anczos method for
  computing eigenvalues}, in Mathematical software, {III} ({P}roc. {S}ympos.,
  {M}ath. {R}es. {C}enter, {U}niv. {W}isconsin, {M}adison, {W}is., 1977), 1977,
  pp.~361--377. Publ. Math. Res. Center, No. 39.

\bibitem{Gonchar1987}
{\sc A.~A. Gonchar and E.~A. Rakhmanov}, {\em Equilibrium distributions and the
  rate of rational approximation of analytic functions}, Mat. Sb. (N.S.),
  134(176) (1987), pp.~306--352, 447.

\bibitem{Guettel2010}
{\sc S.~G\"{u}ttel}, {\em Rational {K}rylov Methods for Operator Functions},
  {PhD thesis}, {Fakult\"{a}t f\"{u}r Mathe\-matik und Informatik der
  Technischen Universit\"{a}t Bergakademie Freiberg}, 2010.

\bibitem{Guettel2013}
{\sc S.~G\"uttel}, {\em Rational {K}rylov approximation of matrix functions:
  {N}umerical methods and optimal pole selection}, GAMM-Mitt., 36 (2013),
  pp.~8--31.

\bibitem{GuettelKnizhnerman2013}
{\sc S.~G\"{u}ttel and L.~Knizhnerman}, {\em A black-box rational {A}rnoldi
  variant for {C}auchy--{S}tieltjes matrix functions}, BIT, 53 (2013),
  pp.~595--616.

\bibitem{Henrici1977}
{\sc P.~Henrici}, {\em Applied and Computational Complex Analysis, Vol. 2},
  John Wiley \& Sons, New York, 1977.

\bibitem{Higham2008}
{\sc N.~J. Higham}, {\em Functions of Matrices: Theory and Computation}, SIAM,
  Philadelphia, 2008.

\bibitem{HochbruckOstermann2010}
{\sc M.~Hochbruck and A.~Ostermann}, {\em Exponential integrators}, Acta
  Numerica, 19 (2010), pp.~209--286.

\bibitem{JagelsReichel2011}
{\sc C.~Jagels and L.~Reichel}, {\em Recursion relations for the extended
  {K}rylov subspace method}, Linear Algebra Appl., 434 (2011), pp.~1716--1732.

\bibitem{KnizhnermanSimoncini2010}
{\sc L.~Knizhnerman and V.~Simoncini}, {\em A new investigation of the extended
  {K}rylov subspace method for matrix function evaluations}, Numer.\ Linear
  Algebra Appl., 17 (2010), pp.~615--638.

\bibitem{MN04}
{\sc I.~Moret and P.~Novati}, {\em {RD-rational approximations of the matrix
  exponential.}}, BIT, 44 (2004), pp.~595--615.

\bibitem{NakaHigh2013}
{\sc Y.~Nakatsukasa and N.~J. Higham}, {\em Stable and efficient spectral
  divide and conquer algorithms for the symmetric eigenvalue decomposition and
  the {SVD}}, SIAM J. Sci. Comput., 35 (2013), pp.~A1325--A1349.

\bibitem{PetrushevPopov1988}
{\sc P.~P. Petrushev and V.~A. Popov}, {\em Rational Approximation of Real
  Functions}, Encyclopedia of Mathematics and its Applications, Cambridge
  University Press, 1988.

\bibitem{Reichel1990}
{\sc L.~Reichel}, {\em Newton interpolation at {L}eja points}, BIT, 30 (1990),
  pp.~332--346.

\bibitem{Roberts1980}
{\sc J.~D. Roberts}, {\em Linear model reduction and solution of the algebraic
  {R}iccati equation by use of the sign function}, Internat.\ J.\ Control, 32
  (1980), pp.~677--687.

\bibitem{Saad1992}
{\sc Y.~Saad}, {\em Analysis of some {K}rylov subspace approximations to the
  matrix exponential operator}, SIAM J.\ Numer.\ Anal., 29 (1992),
  pp.~209--228.

\bibitem{ShermanMorrison1950}
{\sc J.~Sherman and W.~J. Morrison}, {\em Adjustment of an inverse matrix
  corresponding to a change in one element of a given matrix}, Ann.\ Math.\
  Statist., 21 (1950), pp.~124--127.

\bibitem{Shuman2013}
{\sc D.~I. Shuman, S.~K. Narang, P.~Frossard, A.~Ortega, and P.~Vandergheynst},
  {\em {The emerging field of signal processing on graphs: Extending
  high-dimensional data analysis to networks and other irregular domains}},
  IEEE Signal Process. Mag., 30 (2013), pp.~83--98.

\bibitem{Simoncini2007}
{\sc V.~Simoncini}, {\em A new iterative method for solving large-scale
  {L}yapunov matrix equations}, SIAM J.\ Sci.\ Comput., 29 (2007),
  pp.~1268--1288.

\bibitem{Simoncini2016}
{\sc V.~Simoncini}, {\em Computational methods for linear matrix equations},
  SIAM Rev., 58 (2016), pp.~377--441.

\bibitem{Skripka2019}
{\sc A.~Skripka and A.~Tomskova}, {\em Multilinear operator integrals},
  vol.~2250 of Lecture Notes in Mathematics, Springer, Cham, 2019.

\bibitem{Stoll2019}
{\sc M.~Stoll}, {\em A literature survey of matrix methods for data science},
  tech. rep., 2019.
\newblock arXiv:1912.07896.

\bibitem{vdEH06}
{\sc J.~van~den Eshof and M.~Hochbruck}, {\em {Preconditioning Lanczos
  approximations to the matrix exponential}}, SIAM J. Sci. Comput., 27 (2006),
  pp.~1438--1457.

\bibitem{Zolotarev1877}
{\sc G.~Zolotarev}, {\em Application of elliptic functions to the problem of
  functions which vary the least or the most from zero}, Abh.\ St.\ Petersb.,
  30 (1877), pp.~1--59.

\end{thebibliography}

\end{document}